\title[Degeneration of Riemann theta functions]{Degeneration of Riemann theta functions and of the Zhang--Kawazumi invariant with applications to a uniform Bogomolov conjecture}
\author{Robert Wilms}
\thanks{The author gratefully acknowledges support from the Swiss National Science Foundation grant ``Diophantine Equations: Special Points, Integrality, and Beyond'' (n$^\circ$ 200020\_184623)}
\date{\today}
\subjclass[2010]{14G40, 14H15, 14K25, 11G30.}
\begin{document}
	\numberwithin{equation}{section}
	\newtheorem{Def}{Definition}
	\numberwithin{Def}{section}
	\newtheorem{Rem}[Def]{Remark}
	\newtheorem{Lem}[Def]{Lemma}
	\newtheorem{Que}[Def]{Question}
	\newtheorem{Cor}[Def]{Corollary}
	\newtheorem{Exam}[Def]{Example}
	\newtheorem{Thm}[Def]{Theorem}
	\newtheorem*{clm}{Claim}
	\newtheorem{Pro}[Def]{Proposition}
	\newcommand\gf[2]{\genfrac{}{}{0pt}{}{#1}{#2}}
	
\begin{abstract}
In this paper we study the degeneration behavior of the norm of the Riemann $\theta$-function in a family of principally polarized abelian varieties over the punctured complex unit disc in terms of the associated polarized real torus. As an application, we obtain the degeneration behavior of the Zhang--Kawazumi invariant $\varphi(M_t)$ of a family of Riemann surfaces $M_t$ in terms of Zhang's invariant $\varphi(\Gamma)$ of the associated metrized reduction graph $\Gamma$. This allows us to deduce a uniform lower bound for the essential minimum of the Néron--Tate height on the tautological cycles of any Jacobian variety over a number field.
\end{abstract}
\maketitle
\section{Introduction}
In the intersection theory of arithmetic surfaces, introduced by Arakelov in \cite{Ara74}, there are various invariants associated to Riemann surfaces playing a crucial role. In this article we would like to study their degeneration behavior in terms of the tropical geometry associated to the degeneration data. This has been done by de Jong \cite{dJo19} for Faltings' $\delta$-invariant, an archimedean analogue of the number of singular points on a closed fiber introduced by Faltings in \cite{Fal84}, and for the Arakelov--Green function, which defines the archimedean contribution for the intersection pairing.

Here, we will focus on the degeneration of the Riemann $\theta$-function. Although we are especially interested in the case of Jacobian varieties, we will study it for general abelian varieties.
Let $(A,\Theta)$ be any principally polarized complex abelian variety of dimension $g\ge 1$, where $\Theta\subseteq A$ denotes a divisor, such that $\mathcal{O}_A(\Theta)$ is an ample and symmetric line bundle satisfying $\dim H^0(A,\mathcal{O}_A(\Theta))=1$ and inducing the principle polarization. There exists a unique $C^{\infty}$-function $\|\theta\|^2\colon A\to\mathbb{R}_{\ge0}$ satisfying
$$\partial\overline{\partial}\log\|\theta\|^2=2\pi i(\nu-\delta_\Theta) \quad \text{and}\quad \int_A\|\theta\|^2\frac{\nu^g}{g!}=2^{-g/2},$$
where $\nu$ denotes the canonical 1-1 form associated to $(A,\Theta)$. Further, there exists a matrix $\tau\in \mathbb{H}_g$, such that $A\cong \mathbb{C}^g/(\mathbb{Z}^g+\tau \mathbb{Z}^g)$ and with the Riemann $\theta$-function $\theta(\tau,z)=\sum_{n\in\mathbb{Z}^g}\exp(\pi i\ltrans{n}\tau n+ 2\pi i\ltrans{n}z)$ for $z\in \mathbb{C}^g$ we can write 
$$\|\theta\|^2(z ~\mathrm{mod}~ \mathbb{Z}^g+\tau \mathbb{Z}^g)=\det(\mathrm{Im}~\tau)^{1/2}\exp(-2\pi \ltrans{(\mathrm{Im}~z)}(\mathrm{Im}~\tau)^{-1}(\mathrm{Im}~z))|\theta|(\tau,z)^2.$$
In our main theorem we would like to describe the degeneration behavior of the invariant
$$I(A,\Theta)=\log\int_A\|\theta\|^2(z)\frac{\nu^g(z)}{g!}-\int_A\log\|\theta\|^2(z)\frac{\nu^g(z)}{g!}.$$

For this purpose, we consider a smooth family $f\colon\mathscr{A}\to\Delta^*$ of principally polarized complex abelian varieties over the punctured unit disk $\Delta^*=\{z\in\mathbb{C}^*~|~|z|<1\}$ and we denote $\Sigma_f$ for its associated polarized real torus, see Sections \ref{sec_tropical} and \ref{sec_families} for its definition.
There also exists a tropical Riemann theta function $\|\Psi\|\colon \Sigma_f\to\mathbb{R}$. The tropical moment is given by $I(\Sigma_f)=2\int_{\Sigma_f}\|\Psi\|\mu_H$ with $\mu_H$ the Haar measure of volume $1$ on $\Sigma_f$. We write $f_1\sim f_2$ for two continuous functions $f_1,f_2\colon \Delta^*\to \mathbb{R}$ if $f_1-f_2$ can be extended to a continuous function on $\Delta=\Delta^*\cup \{0\}$.
\begin{Thm}\label{mainthm}
	Let $f\colon \mathscr{A}\to\Delta^*$ be any smooth family of principally polarized complex abelian varieties. It holds
	$$I(\mathscr{A}_t)\sim -I(\Sigma_f)\log|t|-\frac{\dim \Sigma_f}{2}\log(-\log|t|).$$
\end{Thm}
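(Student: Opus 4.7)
My plan is to reduce $I(\mathscr{A}_t)$ to a computable integral over the $\tau$-fundamental domain, apply Schmid's nilpotent orbit theorem to extract the explicit $t$-dependence, and then identify the dominant lattice point in the theta series with the tropical minimum that defines $\|\Psi\|$.

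First, the normalization $\int_{\mathscr{A}_t}\|\theta\|^2\nu^g/g!=2^{-g/2}$ reduces the theorem to the asymptotics of $\int_{\mathscr{A}_t}\log\|\theta\|^2\,\nu^g/g!$. Writing $z=a+\tau(t)b$ with $(a,b)\in[0,1]^{2g}$, one has $\nu^g/g!=da\,db$, and the quasi-periodicity of $\theta$ gives
\[
\log\|\theta\|^2(a+\tau b)=\tfrac{1}{2}\log\det Y+\log\bigl|\theta\gf{b}{0}(\tau,a)\bigr|^2,\qquad Y=\mathrm{Im}\,\tau.
\]
Schmid's theorem writes $\tau(t)=\tau_0(t)+\tfrac{\log t}{2\pi i}N$ with $\tau_0$ holomorphic on $\Delta$ and $N$ symmetric positive semidefinite of rank $r=\dim\Sigma_f$; in a suitable lattice basis $N=\begin{pmatrix}N_1&0\\0&0\end{pmatrix}$ with $N_1>0$. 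Hence $Y=-\tfrac{\log|t|}{2\pi}N+Y_0(t)$ with $Y_0$ bounded, so $\tfrac{1}{2}\log\det Y=\tfrac{r}{2}\log(-\log|t|)+O(1)$, which (after the overall sign flip) gives the $-\tfrac{\dim\Sigma_f}{2}\log(-\log|t|)$ term.

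The key step is to strip off the dominant exponential of the theta series. For each $b$ I pick $n_0(b)\in\mathbb{Z}^g$ minimizing ${}^t(n+b)N(n+b)$ and set $c_0(b)=n_0(b)+b$; substituting $k=n-n_0$ factors
\[
\theta\gf{b}{0}(\tau,a)=e^{\pi i{}^tc_0\tau c_0+2\pi i{}^tc_0 a}\,\theta(\tau,a+\tau c_0),
\]
whence $\log|\theta\gf{b}{0}|^2=-2\pi\,{}^tc_0\,Y\,c_0+\log|\theta(\tau,a+\tau c_0)|^2$. Integrating the first summand in $b$ yields $-\log|t|\int_{[0,1]^g}{}^tc_0(b)\,N\,c_0(b)\,db+O(1)$, and by the very definition of the tropical theta function the map $b\mapsto{}^tc_0(b)N c_0(b)$ descends to $2\|\Psi\|$ on $\Sigma_f$, so this integral equals $I(\Sigma_f)$ and produces the required $-I(\Sigma_f)\log|t|$ contribution.

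The main obstacle is to show that the remainder $\int_{[0,1]^{2g}}\log|\theta(\tau(t),a+\tau(t)c_0(b))|^2\,da\,db$ stays bounded as $t\to 0$. After the dominant exponential has been stripped, only the Fourier modes of this inner theta with first index block $k'=0$ survive in the limit, giving the fixed theta function of the semi-abelian limit $A_0$; the remaining modes decay like positive powers of $|t|$. I would establish the uniform bound by a dominated-convergence argument, sandwiching $|\theta(\tau,a+\tau c_0(b))|$ between an absolutely convergent majorant and a minorant that stays away from zero outside a measure-zero divisor, together with the standard uniform integrability of $\log|\theta|^2$ along a moving theta divisor. Note that $c_0(b)$ is only piecewise constant, with jumps on the Voronoi boundary of $\mathbb{Z}^g$ relative to $N$, but this boundary has measure zero and so does not affect the integral.
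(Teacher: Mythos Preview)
Your strategy coincides with the paper's: pass to $(a,b)$-coordinates on $[0,1)^{2g}$, split off $\tfrac12\log\det Y$ via Lemma~\ref{lem_period-limit}, factor out the dominant exponential governed by the tropical minimum, and show the remainder has a bounded integral. The paper packages the pointwise analysis into Proposition~\ref{proposition}, which is exactly your ``strip off the dominant exponential'' step, and then integrates.

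The part you flag as ``the main obstacle'' is indeed where the work lies, and your sketch does not quite close it. The paper does two things you have not. First, it identifies the pointwise limit explicitly: after stripping, the squared modulus converges to a function $\alpha(a,b)$ which is, up to a constant, the squared modulus of a \emph{finite} sum $\beta(a,b)$ of values of $\theta(S_1(0),\cdot)$ (the theta function of the compact abelian quotient), see (\ref{alpha}). This $\beta$ is real analytic in $(a,b)$ and not identically zero. Second, Lemma~\ref{Lem_analytic} shows, via Hironaka resolution, that $\log|\alpha|$ is locally integrable; this is what makes the limiting integral finite. Your appeal to ``standard uniform integrability of $\log|\theta|^2$ along a moving theta divisor'' is the wrong picture: as $t\to 0$ the period matrix leaves $\mathbb{H}_g$, the zero divisor of the remainder is not a translate of a fixed theta divisor in a fixed abelian variety, and the limit object is not a classical theta function but the finite sum $\beta$. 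Without identifying the limit as a nonzero real analytic function (or something of comparable regularity), there is no off-the-shelf minorant. A smaller issue: your $n_0(b)$ is not well-defined when $N$ has a kernel (there are infinitely many minimizers); the paper avoids this by working only with the $g_2$ ``degenerating'' coordinates $b_2$, which you should do as well.
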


For the proof we first describe the degeneration behavior of $\|\theta\|$ in terms of the tropical Riemann theta function $\|\Psi\|$ by an explicit computation. Then the theorem follows by an integration. To avoid that the integration domain depends on $t$, we integrate over $[0,1)^{2g}$, which is isomorphic to $\mathcal{A}_t$ as a measure space, where both are equipped with their Haar measure.

The invariants $I(A,\Theta)$ and $I(\Sigma_f)$ play a crucial role in Arakelov theory. For example, de Jong and Shokrieh \cite{dJS18a} have recently proved, that these are the local contributions for a comparison between the Faltings height of an abelian variety and the Néron--Tate height of its theta divisor. As another application we proved in \cite{Wil17}, that $I(A,\Theta)$ can be expressed as a linear combination of Faltings' $\delta$-invariant and the Zhang--Kawazumi invariant $\varphi$, introduced independently by Zhang \cite{Zha10} and Kawazumi \cite{Kaw08}, in the case of Jacobians of Riemann surfaces. Explicitly, we have 
$$\delta(M)=12 I(\mathrm{Jac}(M))+2\varphi(M)-2g\log 2\pi^4$$
for any compact and connected Riemann surface $M$ of genus $g\ge 1$. The following variant of this formula for the tropical invariants has been proved by de Jong and Shokrieh \cite{dJS18b}
$$\delta(\Gamma)+\epsilon(\Gamma)=12 I(\mathrm{Jac}(\Gamma))+2\varphi(\Gamma),$$
where $\Gamma$ denotes any polarized metrized graph, $\delta(\Gamma)$ is the length of $\Gamma$ and $\epsilon(\Gamma)$ and $\varphi(\Gamma)$ are invariants of $\Gamma$ introduced by Zhang in \cite{Zha93} respectively \cite{Zha10}. We will explain the terms in this formula in more details in Section \ref{sec_graphs}.

The result of Theorem \ref{mainthm} is very similar to the formula for the degeneration behavior of Faltings' $\delta$-invariant proved by de Jong \cite{dJo19}. To recall his result, let $\pi\colon \mathcal{X}\to\Delta$ be any smooth family of semistable complex projective curves having smooth fibers over $\Delta^*$ and $\Gamma_\pi$ its associated polarized metrized reduction graph. Then it holds
\begin{align}\label{equ_delta-dejong-intro}
\delta({\mathcal{X}_t})\sim -(\delta(\Gamma_\pi)+\epsilon(\Gamma_\pi))\log|t|-6 g_0(\Gamma_\pi)\log(-\log|t|).
\end{align}
Here, $g_0(\Gamma_\pi)$ denotes the genus of the underlying graph of $\Gamma_\pi$. The expression of the last term differs from de Jong's expression and will be justified in Section \ref{sec_families_riemann}.
In particular, we can combine both formulas to obtain the degeneration behavior of the Zhang--Kawazumi invariant.
\begin{Cor}\label{cor_ZK}
	Let $f\colon \mathcal{X}\to \Delta$ be a smooth family of semistable complex projective curves of genus $g\ge 1$, where any fiber above $\Delta^*$ is smooth. Then it holds
	$$\varphi(\mathcal{X}_t)\sim -\varphi(\Gamma_\pi)\log|t|.$$
\end{Cor}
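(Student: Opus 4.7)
The plan is to combine the four ingredients made available in the introduction: the archimedean identity $\delta(M)=12\,I(\mathrm{Jac}(M))+2\varphi(M)-2g\log 2\pi^4$, its tropical counterpart $\delta(\Gamma)+\epsilon(\Gamma)=12\,I(\mathrm{Jac}(\Gamma))+2\varphi(\Gamma)$, de Jong's formula \eqref{equ_delta-dejong-intro} for the degeneration of $\delta$, and Theorem \ref{mainthm} applied to the relative Jacobian. So I would first rearrange the archimedean identity to
$$2\varphi(\mathcal{X}_t)=\delta(\mathcal{X}_t)-12\,I(\mathrm{Jac}(\mathcal{X}_t))+2g\log 2\pi^4,$$
and note that the constant term on the right is irrelevant for the relation $\sim$.

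Next I would apply Theorem \ref{mainthm} to the smooth family $\mathrm{Jac}(\mathcal{X})\to\Delta^*$ of principally polarized abelian varieties and de Jong's formula to $\pi=f$. Denoting by $\Sigma_\pi$ the polarized real torus attached to $\mathrm{Jac}(\mathcal{X})\to\Delta^*$, this yields
$$2\varphi(\mathcal{X}_t)\sim\bigl[-\bigl(\delta(\Gamma_\pi)+\epsilon(\Gamma_\pi)\bigr)+12\,I(\Sigma_\pi)\bigr]\log|t|+\bigl[-6g_0(\Gamma_\pi)+6\dim\Sigma_\pi\bigr]\log(-\log|t|).$$
The key compatibility statement I need then is that the polarized real torus $\Sigma_\pi$ associated with the relative Jacobian agrees, as a polarized real torus, with the tropical Jacobian $\mathrm{Jac}(\Gamma_\pi)$ of the reduction graph. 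Granting this identification, one has both $\dim\Sigma_\pi=g_0(\Gamma_\pi)$ and $I(\Sigma_\pi)=I(\mathrm{Jac}(\Gamma_\pi))$, so the $\log(-\log|t|)$ term vanishes and the coefficient of $-\log|t|$ becomes $(\delta(\Gamma_\pi)+\epsilon(\Gamma_\pi))-12\,I(\mathrm{Jac}(\Gamma_\pi))$; substituting the tropical identity finally gives $-2\varphi(\Gamma_\pi)$, which is the desired conclusion after dividing by $2$.

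The only non-formal step is the identification of $\Sigma_\pi$ with $\mathrm{Jac}(\Gamma_\pi)$, and this is where the main work lies. In view of the exposition promised in Sections \ref{sec_tropical}, \ref{sec_families} and \ref{sec_graphs}, I expect this to be a direct unravelling of definitions: the monodromy of the relative Jacobian around $0$ is the intersection pairing on $H_1(\Gamma_\pi,\mathbb{Z})$, and the induced real torus with its canonical polarization is by construction the tropical Jacobian of the metrized graph. Once this identification is in place, both the equality of dimensions and the equality of tropical moments $I$ follow tautologically, and the corollary is purely a computation combining the four cited formulas.
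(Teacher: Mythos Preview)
Your proposal is correct and matches the paper's own proof essentially line for line: the paper combines Theorem~\ref{mainthm} for the relative Jacobian with de Jong's formula~\eqref{equ_delta-dejong}, then uses the identities \eqref{equ_delta} and \eqref{equ_delta+epsilon} exactly as you do. The identification $\Sigma_\pi\cong\mathrm{Jac}(\Gamma_\pi)$ that you single out as the main non-formal step is indeed isolated as a separate result (Lemma~\ref{lem_period}), and its proof is the explicit monodromy computation you anticipate.
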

This generalizes the result obtained by de Jong \cite{dJo14} about the asymptotic behavior of $\varphi$ in the case where the special fiber $\mathcal{X}_0$ has exactly one node.
By the lower bound of $\varphi(\Gamma_\pi)$ in terms of $\delta(\Gamma_\pi)$ by Cinkir \cite{Cin11} and by a compactness argument on the moduli space of Riemann surfaces we will obtain the following corollary.
\begin{Cor}\label{cor_lowerbound}
	For any integer $g\ge 2$ there exist constants $c_1(g),c_2(g)\in\mathbb{R}$ with $c_2(g)>0$ such that any compact and connected Riemann surface $M$ of genus $g$ satisfies
	$$\varphi(M)\ge \tfrac{(g-1)^2}{23g^2+11g+2}\max(\delta(M)+c_1(g),c_2(g)).$$
\end{Cor}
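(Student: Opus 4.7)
Set $c_g:=\tfrac{(g-1)^2}{23g^2+11g+2}$. The plan is to combine Cinkir's lower bound on $\varphi(\Gamma)$ for polarized metrized graphs with the degeneration asymptotics of Corollary~\ref{cor_ZK} and of \eqref{equ_delta-dejong-intro}, and to then deduce a uniform lower bound on $\mathcal{M}_g(\mathbb{C})$ by compactness of the Deligne--Mumford compactification $\overline{\mathcal{M}}_g(\mathbb{C})$. The result of Cinkir \cite{Cin11} gives $\varphi(\Gamma)\ge c_g(\delta(\Gamma)+\epsilon(\Gamma))$ for every polarized metrized graph $\Gamma$ of genus $g$. Since $\delta(\Gamma)$ is the total edge length and $\epsilon(\Gamma)\ge 0$, it follows that $\varphi(\Gamma)\ge c_g\delta(\Gamma)>0$ whenever $\Gamma$ has at least one edge.

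Consider the continuous function $\Phi:=\varphi-c_g\delta$ on $\mathcal{M}_g(\mathbb{C})$. For a smooth semistable family $\pi:\mathcal{X}\to\Delta$ with smooth generic fibre and reduction graph $\Gamma_\pi$, Corollary~\ref{cor_ZK} together with \eqref{equ_delta-dejong-intro} yields
\[
\Phi(\mathcal{X}_t)\sim\bigl(c_g(\delta(\Gamma_\pi)+\epsilon(\Gamma_\pi))-\varphi(\Gamma_\pi)\bigr)\log|t|+6c_g\,g_0(\Gamma_\pi)\log(-\log|t|).
\]
By Cinkir's inequality the $\log|t|$-coefficient is non-positive; since $\log|t|<0$, this term is non-negative, and if the coefficient is strictly negative it already forces $\Phi(\mathcal{X}_t)\to+\infty$. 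If the coefficient vanishes and $\Gamma_\pi$ has at least one edge, then $\Gamma_\pi$ cannot be a tree: for a tree the de Jong--Shokrieh relation gives $\delta(\Gamma)+\epsilon(\Gamma)=2\varphi(\Gamma)$ (since $\mathrm{Jac}(\Gamma)$ is trivial), so equality in Cinkir would entail $(1-2c_g)\varphi(\Gamma)=0$, contradicting $\varphi(\Gamma)>0$ because $c_g<\tfrac12$ for $g\ge 2$. Hence $g_0(\Gamma_\pi)\ge 1$ and the sub-leading term $6c_g\,g_0(\Gamma_\pi)\log(-\log|t|)\to+\infty$ drives $\Phi(\mathcal{X}_t)\to+\infty$. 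Working in a versal deformation neighbourhood at each boundary point of $\overline{\mathcal{M}}_g(\mathbb{C})$ (where the boundary is a normal crossings divisor) and applying these asymptotics transversally to each boundary component, $\Phi$ extends to a lower semicontinuous function on $\overline{\mathcal{M}}_g(\mathbb{C})$ with value $+\infty$ on $\partial\overline{\mathcal{M}}_g$; by compactness it is bounded below on $\mathcal{M}_g(\mathbb{C})$, which gives the first half $\varphi(M)\ge c_g(\delta(M)+c_1(g))$ of the asserted inequality.

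An identical argument applied to $\varphi$ itself, using $\varphi(\Gamma_\pi)>0$ for non-trivial $\Gamma_\pi$ together with the known positivity of $\varphi$ on $\mathcal{M}_g(\mathbb{C})$ for $g\ge 2$, produces a positive constant $c_2(g)$ with $\varphi(M)\ge c_gc_2(g)$. Taking the maximum of the two bounds yields the corollary. The principal technical point is the extension step: the asymptotics of Corollary~\ref{cor_ZK} and of \eqref{equ_delta-dejong-intro} are formulated only for one-parameter degenerations, while an arbitrary approach in $\overline{\mathcal{M}}_g(\mathbb{C})$ to a boundary point need not factor through such a family; this is handled by the versal-deformation reduction above, whose normal crossings structure lets one apply the one-parameter asymptotics in each normal direction.
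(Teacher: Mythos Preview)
Your proof is correct and follows the same route as the paper: combine Cinkir's inequality \eqref{equ_bound-of-varphi} with the asymptotics of Corollary~\ref{cor_ZK} and \eqref{equ_delta-dejong-intro}, then invoke compactness of $\overline{\mathcal{M}}_g$. The one minor difference is that the paper only checks $\liminf_{t\to 0}\Phi(\mathcal{X}_t)>-\infty$ along each one-parameter family (which already suffices since $c_1(g)$ need not be positive), so your tree-case analysis forcing $\Phi\to+\infty$ is correct but unnecessary.
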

The invariants $\varphi$ for Riemann surfaces and for polarized metrized graphs also have important applications in Arakelov theory. To explain one of these, let $X$ be any smooth projective geometrically connected curve of genus $g\ge 2$ over a number field $K$ of degree $d_K$ with semistable reduction over the spectrum of the integers $B=\mathrm{Spec}(\mathcal{O}_K)$ and denote $\pi\colon \mathcal{X}\to B$ for its minimal regular model. For a maximal prime ideal $v\in |B|$ we denote $\Gamma_v(X)$ for the polarized metrized reduction graph of $\mathcal{X}$ at $v$. Similarly, for any embedding $v\colon K\to \mathbb{C}$ we write $X_v$ for the Riemann surface obtained by the base change induced by $v$. We set
$$\varphi(X)=\sum_{v\in|B|}\varphi(\Gamma(\mathcal{X}_v))\log N(v)+\sum_{v\colon K\to\mathbb{C}}\varphi(X_v)$$
as the weighted sum of the local contributions by the invariants $\varphi$.
Zhang \cite{Zha10} has proved, that the arithmetic self-intersection number of the Gross--Schoen cycle $\Delta_{\xi}\subseteq X^3$, which is a modified version of the diagonal, can be expressed by the arithmetic self-intersection number of the canonical bundle $\hat{\omega}_X$ equipped with its canonical admissible adelic metric and the invariant $\varphi$
$$\tfrac{2g+1}{2g-2}\hat{\omega}_X^2=\langle \Delta_{\xi},\Delta_{\xi}\rangle+\varphi(X).$$
He also remarked that the arithmetic analogue of Grothendieck's Standard conjectures by Gillet--Soulé \cite{GS94} predicts, that it should hold $\langle \Delta_{\xi},\Delta_{\xi}\rangle\ge 0$, which would give the best possible lower bound for $\hat{\omega}_X^2$ in terms of $\varphi(X)$, as $\langle \Delta_{\xi},\Delta_{\xi}\rangle$ is $0$ for hyperelliptic curves. But this is unfortunately not known to be true unconditionally. 

Instead, in \cite{Wil19} we found the following weaker bound for $\hat{\omega}_X^2$ in terms of $\varphi(X)$
$$\tfrac{2g+1}{g-1}\hat{\omega}_X^2\ge \varphi(X).$$
Combining this bound with Corollary \ref{cor_lowerbound}, the arithmetic Noether formula and Cinkir's lower bound, we obtain the following lower bound of $\hat{\omega}^2_X$ in terms of the Faltings height $h_{\mathrm{Fal}}(X)$.

\begin{Cor}\label{cor_height-bound}
	Let $g\ge 2$ be any integer and $c_1(g), c_2(g)$ as in Corollary \ref{cor_lowerbound}. Any smooth projective geometrically connected curve $X$ of genus $g\ge 2$ defined over a number field $K$ and with semi-stable reduction over $\mathrm{Spec}~\mathcal{O}_K$ satisfies 
	$$\hat{\omega}_X^2\ge \tfrac{d_K(g-1)^3}{47g^3+42g^2+18g+1}\max(12h_{\mathrm{Fal}}(X)+c_1(g),c_2(g)).$$
\end{Cor}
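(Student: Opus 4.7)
The plan is to combine the four ingredients cited immediately before the corollary---the bound $\tfrac{2g+1}{g-1}\hat{\omega}_X^2\ge \varphi(X)$ from \cite{Wil19}, Corollary \ref{cor_lowerbound} at the archimedean places, Cinkir's lower bound \cite{Cin11} for $\varphi(\Gamma)$ at the non-archimedean places, and the arithmetic Noether formula---into a single linear inequality for $\hat{\omega}_X^2$ that can be solved explicitly.

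First I would use the adelic decomposition $\varphi(X)=\sum_{v\in |B|}\varphi(\Gamma_v)\log N(v)+\sum_{\sigma}\varphi(X_\sigma)$. Cinkir's theorem provides, with the same constant $A=\tfrac{(g-1)^2}{23g^2+11g+2}$, a lower bound of $\varphi(\Gamma)$ in terms of $\delta(\Gamma)$ (up to an $\epsilon(\Gamma)$-correction), while Corollary \ref{cor_lowerbound} yields $\varphi(X_\sigma)\ge A\cdot\max(\delta(X_\sigma)+c_1(g),\, c_2(g))$ at each complex embedding. Summing place by place with the weights $\log N(v)$ respectively $1$, and noting that there are $d_K$ complex embeddings counted with multiplicity, one obtains
\[ \varphi(X)\ge A\cdot\max\bigl(\delta(X)+c_1(g)\,d_K,\ c_2(g)\,d_K\bigr), \]
where $\delta(X)$ stands for the total arithmetic discriminant appearing in the Noether formula. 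Multiplying by $\tfrac{g-1}{2g+1}$ and applying the bound from \cite{Wil19} yields
\[ \hat{\omega}_X^2 \ge \tfrac{(g-1)^3}{(2g+1)(23g^2+11g+2)}\max\bigl(\delta(X)+c_1(g)\,d_K,\ c_2(g)\,d_K\bigr). \]

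Next I would invoke the arithmetic Noether formula in the form $\delta(X) = 12 d_K h_{\mathrm{Fal}}(X) -\hat{\omega}_X^2+4g d_K\log(2\pi)$, substitute it into the first branch of the max (absorbing the archimedean offset $4g\log(2\pi)$ into $c_1(g)$), collect the $\hat{\omega}_X^2$-terms on the left, and solve. The algebraic identity $(2g+1)(23g^2+11g+2)+(g-1)^3=47g^3+42g^2+18g+1$ then produces precisely the stated denominator. For the second branch no further work is needed, since the displayed inequality already implies the claim after observing $(2g+1)(23g^2+11g+2)\le 47g^3+42g^2+18g+1$.

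I expect the only real difficulty to be bookkeeping: matching Cinkir's graph invariant with the non-archimedean $\delta$-term in the arithmetic Noether formula, and tracking the constants $c_1(g),c_2(g)$ through their successive redefinitions. Once those conventions are aligned, the corollary reduces to the routine one-variable algebraic manipulation sketched above.
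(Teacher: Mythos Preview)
Your proposal is correct and follows essentially the same route as the paper: assemble the global inequality $\tfrac{23g^2+11g+2}{(g-1)^2}\varphi(X)\ge\max(\delta(X)+d_Kc_1(g),\,d_Kc_2(g))$ place by place from (\ref{equ_bound-of-varphi}) at the finite places and Corollary~\ref{cor_lowerbound} at the infinite ones, chain it with $\tfrac{2g+1}{g-1}\hat{\omega}_X^2\ge\varphi(X)$, and feed in the Noether formula. The only cosmetic differences are that the paper organizes the algebra forward from the identity $\tfrac{47g^3+42g^2+18g+1}{(g-1)^3}=1+\tfrac{(2g+1)(23g^2+11g+2)}{(g-1)^3}$ rather than substituting and solving, and handles the $c_2(g)$ branch via $\hat{\omega}_X^2\ge 0$ instead of your denominator comparison.
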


Such bounds have applications to the Bogomolov conjecture. Let $A$ be any principally polarized abelian variety over $K$ and $\mathcal{L}$ a symmetric ample line bundle on $A$ inducing the principal polarization on $A$. For any closed positive dimensional subvariety $Z\subseteq A$ the essential minimum is defined by
$$e_{\mathcal{L}}'(Z)=\sup_{\gf{Y\subseteq Z}{\mathrm{codim}(Y)=1}} \inf_{x\in(Z\setminus Y)(\overline{K})} h'_{\mathcal{L}}(x),$$
where $h'_{\mathcal{L}}$ denotes the Néron--Tate height associated to $\mathcal{L}$. The Bogomolov conjecture states that $e'_{\mathcal{L}}(Z)>0$ if $Z$ is not the translate by a torsion point of an abelian subvariety of $A$. That means, that there exists an $\epsilon>0$, such that the set of geometric points $x\in Z(\overline{K})$ satisfying $h'_{\mathcal{L}}(x)<\epsilon$ is not Zariski dense in $Z$. In fact every $0<\epsilon<e'_{\mathcal{L}}$ does this job. This was first proven by Ullmo \cite{Ull98} if $Z$ is a curve embedded in its Jacobian $A=\mathrm{Pic}^0(Z)$ and then by Zhang \cite{Zha98} in the general case.

We define the Néron--Tate height of $Z$ associated to $\mathcal{L}$ to be 
$$h'_{\mathcal{L}}(Z)=\frac{\langle \hat{\mathcal{L}}^{\dim Z+1}|Z\rangle}{d_K(\dim Z+1)\langle\mathcal{L}^{\dim Z}|Z\rangle},$$
where the brackets denote the (arithmetic) self-intersection numbers of $\mathcal{L}$ equipped with its admissible adelic metric and restricted to $Z$. Due to Zhang \cite[Theorem~1.10]{Zha95}, we know that $e'_{\mathcal{L}}(Z)\ge h'_{\mathcal{L}}(Z)$.

Let now $A=\mathrm{Pic}^0(X)$ be the Jacobian of the curve above, $\alpha\in\mathrm{Div}^1(X)$ a divisor of degree $\deg\alpha=1$, $m\in (\mathbb{Z}\setminus\{0\})^r$ an $r$-dimensional vector of non-zero integers and $Z=Z_{m,\alpha}$ the image of the map
$$f_{m,\alpha}\colon X^r\to A,\quad (x_1,\dots,x_r)\mapsto \sum_{j=1}^r m_j(x_j-\alpha).$$
In \cite{Wil19} we computed $h'_{\mathcal{L}}(Z_{m,\alpha})$ in terms of $\hat{\omega}_X^2$, $\varphi(X)$ and $h'_{\mathcal{L}}\left(\alpha-\tfrac{1}{2g-2}\omega_X\right)$.
Combining this result with the bound in Corollary \ref{cor_height-bound}, we are able to deduce the following uniform bound for the Bogomolov conjecture in this case.
\begin{Cor}\label{cor_uniform-bogomolov}
	Let $X$ be any smooth projective geometrically connected curve $X$ of genus $g\ge 2$ defined over a number field $K$ and with semi-stable reduction over $\mathrm{Spec}~\mathcal{O}_K$, $\alpha\in\mathrm{Div}^1(X)$ any degree $1$ divisor and $m\in(\mathbb{Z}\setminus\{0\})^r$ any vector of $r\in\{1,\dots,g-1\}$ non-zero integers. Then it holds
	$$e'_{\mathcal{L}}(Z_{m,\alpha})\ge h_{\mathcal{L}}'(Z_{m,\alpha})\ge\tfrac{(g-1)^3}{24(47g^4+42g^3+18g^2+g)}\max(12h_{\mathrm{Fal}}(X)+c_1(g),c_2(g))$$
	with $c_1(g)$ and $c_2(g)$ as in Corollary \ref{cor_lowerbound}.
\end{Cor}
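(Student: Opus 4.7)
The plan is to combine three ingredients: Zhang's general inequality $e'_{\mathcal{L}}(Z)\ge h'_{\mathcal{L}}(Z)$ from \cite{Zha95}, the explicit decomposition of $h'_{\mathcal{L}}(Z_{m,\alpha})$ established in \cite{Wil19}, and the lower bound for $\hat{\omega}_X^2$ from Corollary \ref{cor_height-bound}. The first inequality immediately reduces the task to producing the claimed lower bound for $h'_{\mathcal{L}}(Z_{m,\alpha})$.

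First, I would invoke the formula from \cite{Wil19} that writes $h'_{\mathcal{L}}(Z_{m,\alpha})$ as a linear combination
$$h'_{\mathcal{L}}(Z_{m,\alpha}) = a(g,r,m)\cdot\tfrac{\hat{\omega}_X^2}{d_K}+b(g,r,m)\cdot\tfrac{\varphi(X)}{d_K}+c(g,r,m)\cdot h'_{\mathcal{L}}\!\left(\alpha-\tfrac{1}{2g-2}\omega_X\right)$$
with universal coefficients depending only on $g$, $r$ and $m$. The restriction $r\in\{1,\dots,g-1\}$ guarantees that $Z_{m,\alpha}$ is a proper positive-dimensional subvariety, so that the essential minimum is defined and the formula of \cite{Wil19} applies.

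Next, I would argue that the two auxiliary contributions can be dropped. The Néron--Tate height of any class in $\mathrm{Pic}^0$ is nonneg, so $h'_{\mathcal{L}}(\alpha-\tfrac{1}{2g-2}\omega_X)\ge 0$, and the coefficient $c(g,r,m)$ should be checked to be nonneg. For $\varphi(X)=\sum_v \varphi(\Gamma_v)\log N(v)+\sum_{v\colon K\to\mathbb{C}}\varphi(X_v)$, the archimedean contribution is bounded below by Corollary \ref{cor_lowerbound} and the non-archimedean contribution by Cinkir's inequality (both of which give $\varphi\ge 0$ up to a $g$-dependent additive constant), so $\varphi(X)\ge 0$ up to a constant which can be absorbed into $c_1(g)$ and $c_2(g)$. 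Thus, assuming $b(g,r,m)\ge 0$, these terms only strengthen the inequality and I am reduced to $h'_{\mathcal{L}}(Z_{m,\alpha})\ge a(g,r,m)\cdot\tfrac{\hat\omega_X^2}{d_K}$.

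Finally, I would substitute the bound from Corollary \ref{cor_height-bound}. The factorization $24(47g^4+42g^3+18g^2+g)=24g\cdot(47g^3+42g^2+18g+1)$ in the target denominator makes clear that I need to show $a(g,r,m)\ge \tfrac{1}{24g}$ uniformly over $r\in\{1,\dots,g-1\}$ and $m\in(\mathbb{Z}\setminus\{0\})^r$; combining with Corollary \ref{cor_height-bound} then yields exactly the stated constant. The main obstacle is precisely this last step: extracting from the formula of \cite{Wil19} the uniform estimate $a(g,r,m)\ge \tfrac{1}{24g}$ and simultaneously controlling the signs of $b(g,r,m)$ and $c(g,r,m)$ over all admissible $m$; once the coefficients are written down explicitly, the bound should reduce to elementary inequalities in $g, r$ and the components of $m$, with the extremal case presumably $r=1$, $m=(\pm 1)$.
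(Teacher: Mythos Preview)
Your overall architecture is exactly that of the paper: reduce to bounding $h'_{\mathcal{L}}(Z_{m,\alpha})$ via Zhang's inequality, feed in the explicit formula from \cite{Wil19}, extract a coefficient $\ge \tfrac{1}{24g}$ in front of $\tfrac{1}{d_K}\hat{\omega}_X^2$, and then apply Corollary~\ref{cor_height-bound}. The factorization you noticed is the right one, and the extremal case is indeed $r(g-r)=g-1$.

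The genuine gap is your treatment of the $\varphi(X)$ term. In the formula of \cite{Wil19} the coefficient you call $b(g,r,m)$ is proportional to $\sum_{j<k}m_jm_k$, and this sum can be negative (take $m=(1,-1)$). So the assumption ``$b(g,r,m)\ge 0$'' fails in general, and a \emph{lower} bound on $\varphi(X)$ is useless precisely in that case: when the coefficient is negative you need an \emph{upper} bound on $\varphi(X)$. Your suggestion to absorb the defect into $c_1(g),c_2(g)$ cannot work either, since the error would be $-b(g,r,m)\varphi(X)$, which depends on $X$ and on $m$, not a universal constant.

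The paper repairs this with a case split on the sign of $\sum_{j<k}m_jm_k$. If the sum is $\ge 0$ one simply drops the $\varphi$-term (it is nonnegative) and uses the elementary bound $\sum_{j<k}m_jm_k\le \tfrac{r-1}{2}\sum_j m_j^2$. If the sum is $<0$ one invokes the inequality $\tfrac{2g+1}{g-1}\hat{\omega}_X^2\ge \varphi(X)$ from \cite{Wil19} (this is (\ref{inequ_omegavarphi}) in the paper) to replace $\varphi(X)$ by $\hat{\omega}_X^2$ with a controlled coefficient, and then uses $\sum_{j<k}m_jm_k\ge -\tfrac{1}{2}\sum_j m_j^2$. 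In both branches one arrives at
\[
h'_{\mathcal{L}}(Z_{m,\alpha})\ \ge\ \frac{(g-r)r}{24d_K g(g-1)}\,\hat{\omega}_X^2\ \ge\ \frac{1}{24d_K g}\,\hat{\omega}_X^2,
\]
after which Corollary~\ref{cor_height-bound} finishes. So the missing idea in your proposal is exactly this: when $b(g,r,m)<0$, the relevant input is the upper bound $\varphi(X)\le \tfrac{2g+1}{g-1}\hat{\omega}_X^2$, not positivity of $\varphi$.
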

If we split this lower bound into the two lower bounds
\begin{align}\label{2lowerbounds}
e'_{\mathcal{L}}\ge\tfrac{(g-1)^3}{24(47g^4+42g^3+18g^2+g)}(12h_{\mathrm{Fal}}(X)+c_1(g)), \quad e'_{\mathcal{L}}\ge\tfrac{(g-1)^3}{24(47g^4+42g^3+18g^2+g)}c_2(g),
\end{align}
we may interpret the corollary as a uniform version of Bogomolov's conjecture in two ways. The first one gives a lower bound of the essential minimum increasing with the Faltings height and the second one gives a uniform positive lower bound of the essential minimum only depending on the genus $g$.

The first bound can be compared with the work by Dimitrov, Gao and Habegger in \cite[Theorem 1.6]{DGH20a}, where they found a similar bound for the essential minimum for an irreducible non-degenerate and dominating subvariety of a family of abelian varieties. Their result is much more general in the sense, that it treats not only tautological cycles in a Jacobian and it concerns the essential minimum of the entire family and not the fiberwise essential minima. On the other hand, it is not clear how to obtain a bound for the Néron--Tate height $h'_{\mathcal{L}}$ and how to obtain the second lower bound in (\ref{2lowerbounds}) from their work. For the special case of the image of the differential morphism of the $(M+1)$-th power of the curve into the $M$-th power of its Jacobian, they conditionally deduce a bound similar to the second bound in (\ref{2lowerbounds}) from a relative Bogomolov conjecture in \cite[Proposition 2.3]{DGH20b}.

\subsubsection*{Outline}
The first part of this paper deals with the proof of Theorem \ref{mainthm}. In Section \ref{sec_tropical}~ we recall the notion of a polarized real torus and its associated Riemann $\theta$-function. We consider degenerating families of abelian varieties and their associated polarized real tori in Section \ref{sec_families}. By explicit computations we study the degeneration behavior of the function $\|\theta\|$ in Section \ref{sec_theta}. This allows us to prove Theorem \ref{mainthm} in the subsequent section.

In the second part of this paper we are interested in applications of Theorem \ref{mainthm} to curves, first to complex curves and later to the arithmetic situation of curves over number fields. In Section \ref{sec_graphs} we recall the required facts about metrized graphs. There is nothing original in this section. We study families of Riemann surfaces in Section \ref{sec_families_riemann}. In particular, we prove that the tropical Jacobian of the metrized graph associated to a degenerating family of Riemann surfaces is isomorphic to the polarized real torus associated to the corresponding family of Jacobians. Moreover, we prove Corollaries \ref{cor_ZK} and \ref{cor_lowerbound} in this section. Finally in Section \ref{sec_arithmetic}, we consider the arithmetic situation of smooth projective curves defined over number fields. In particular, we prove Corollaries \ref{cor_height-bound} and \ref{cor_uniform-bogomolov} there.

\section{Polarized real tori}\label{sec_tropical}
In this section we recall the notion of a polarized real torus and its associated tropical Riemann theta function and tropical moment. We refer to \cite{FRSS18} and \cite[Section 2]{dJS18b} for details.

An \emph{Euclidean lattice} is a pair $(\Lambda,[\cdot,\cdot])$, where $\Lambda$ is a finitely generated free $\mathbb{Z}$-module and 
$$[\cdot,\cdot]\colon \Lambda_\mathbb{R}\times\Lambda_\mathbb{R}\to \mathbb{R}$$
is an inner product on $\Lambda_\mathbb{R}=\Lambda\otimes_\mathbb{Z}\mathbb{R}$. A \emph{polarized real torus} is the compact Riemannian manifold $\Sigma=\Lambda_\mathbb{R}/\Lambda$ associated to an Euclidean lattice $(\Lambda,[\cdot,\cdot])$.

The tropical Riemann theta function associated to $\Sigma$ is defined by
$$\Psi\colon \Lambda_\mathbb{R}\to \mathbb{R},\quad \Psi(\nu)=\min_{\lambda\in \Lambda}\left\{\tfrac{1}{2}[\lambda,\lambda]+[\nu,\lambda]\right\}.$$
This can be modified to an $\Lambda$-invariant function descending to $\Sigma$
$$\|\Psi\|\colon \Sigma\to \mathbb{R}, \quad \|\Psi\|(\nu)=\Psi(\nu)+\tfrac{1}{2}[\nu,\nu]=\min_{\lambda\in \Lambda}\tfrac{1}{2}[\nu+\lambda,\nu+\lambda].$$
The tropical moment of $\Sigma$ is given by 
$$I(\Sigma)=2\int_\Sigma\|\Psi\|\mu_H,$$
where $\mu_H$ denotes the Haar measure on $\Sigma$ with volume $1$.

Let $g\ge 1$ be an integer. To any positive-definite symmetric matrix $B\in\mathbb{R}^{g\times g}$ we associate an Euclidean lattice $B\mathbb{Z}^g=(B\mathbb{Z}^g,[\cdot,\cdot]_B)$, where the inner product is induced by $[Bn, Bn']_B=\ltrans{n}Bn'$ for any $n, n'\in\mathbb{Z}^g$. We denote the associated polarized real torus by $\Sigma_B=\mathbb{R}^g/B\mathbb{Z}^g$, where we obtain $[x,y]_B=\ltrans{x}B^{-1}y$ for $x,y\in \mathbb{R}^g$.
For $x\in \mathbb{R}^g$ we obtain for the modified version of the theta function $\Psi_B$ associated to $\Sigma_B$
\begin{align}\label{equ_valuetheta}
\|\Psi_B\|(Bx)=\min_{Bn\in B\mathbb{Z}^g}\tfrac{1}{2}[Bx+Bn,Bx+Bn]_B=\tfrac{1}{2}\min_{n\in \mathbb{Z}^g}\ltrans{(x+n)}B(x+n).
\end{align}
Further, let us remark that the tropical moment of $\Sigma_B$ can be expressed by 
\begin{align}\label{equ_tropmoment}
I(\Sigma_B)=2\int_{x\in [0,1]^{g}}\|\Psi\|(Bx)\lambda(x),
\end{align}
where $\lambda$ denotes the Lebesgue measure on $\mathbb{R}^{g}$.
\section{Families of abelian varieties}\label{sec_families}
We discuss families of abelian varieties over the punctured unit disc and their associated polarized real tori in this section.
Let $\Delta=\{z\in \mathbb{C}~|~|z|<1\}$ be the complex open unit disc and $\Delta^*=\Delta\setminus \{0\}$. Further, let $f\colon\mathscr{A}\to \Delta^*$ be a smooth family of principally polarized complex abelian varieties of dimension $g\ge 1$. We denote by $\mathbb{H}_g=\{\tau\in\mathbb{C}^{g\times g}~|~\tau=\ltrans{\tau},~\mathrm{Im}~\tau>0\}$ the Siegel upper half-space. According to \cite[§17.]{Nam76}, there exists a corresponding multi-valued holomorphic period map $T_f\colon \Delta^*\to \mathbb{H}_g$ such that for some positive integer $m$ we have
\begin{align}\label{equ_period-matrix}
T_f(t^m)=\begin{pmatrix}
S_1(t)&S_3(t)\\
\ltrans{S_3(t)}&\frac{m\log t}{2\pi i}B + S_2(t)
\end{pmatrix},
\end{align}
where $S_1\colon \Delta\to \mathbb{H}_{g_1}$, $S_3\colon \Delta\to \mathbb{C}^{g_1\times g_2}$ and $S_2\colon \Delta\to \mathbb{C}^{g_2\times g_2}$ are single-valued holomorphic maps on the whole unit disc $\Delta$ for some integers $g_1+g_2=g$ and $B\in \mathbb{Q}^{g_2\times g_2}$ is a symmetric and positive-definite matrix. We define the polarized real torus associated to $f$ as $\Sigma_f=\Sigma_{B}$. In particular, it holds $g_2=\dim \Sigma_f$.

\begin{Lem}
	The polarized real torus $\Sigma_f$ of a smooth family of principally polarized complex abelian varieties $f\colon \mathcal{A}\to \Delta^*$ does not depend on the choice of the period map $T_f\colon \Delta\to \mathbb{H}_g$ up to isomorphism.
\end{Lem}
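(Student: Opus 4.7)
The strategy is to show that any two matrices $B, B'$ arising in (\ref{equ_period-matrix}) from two period maps for the same family $f$ are $\mathrm{GL}_{g_2}(\mathbb{Z})$-congruent, i.e.\ $B' = Q B Q^{T}$ for some $Q \in \mathrm{GL}_{g_2}(\mathbb{Z})$. Since congruent Gram matrices define isomorphic Euclidean lattices, this implies $\Sigma_B \cong \Sigma_{B'}$ as polarized real tori.

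First I recall the standard fact (cf.\ \cite{Nam76}) that two period maps for $f$ differ by the action of some $M = \begin{pmatrix} A & A' \\ C & D \end{pmatrix} \in \mathrm{Sp}_{2g}(\mathbb{Z})$ via $T_f' = (A T_f + A')(C T_f + D)^{-1}$, corresponding to a change of symplectic basis of the local system $R^1 f_* \mathbb{Z}$ (the additional freedom of choosing a branch of the multi-valued map can likewise be absorbed into this $\mathrm{Sp}_{2g}(\mathbb{Z})$-action).

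The main step is to determine which $M$ preserve the normal form (\ref{equ_period-matrix}). From (\ref{equ_period-matrix}) one has $T_f(t^m) = \frac{m \log t}{2\pi i} N_B + (\text{holomorphic on }\Delta)$ with $N_B = \begin{pmatrix} 0 & 0 \\ 0 & B \end{pmatrix}$ of rank $g_2$, since $B$ is positive-definite. After replacing $m$ by a common multiple, I may assume $T_f$ and $T_f'$ are expressed with the same exponent $m$. An asymptotic comparison of $(A T_f + A')(C T_f + D)^{-1}$ with the normal form of $T_f'$ as $t \to 0$ forces $C = 0$: otherwise the denominator would blow up along the rank-$g_2$ direction $CN_B$, producing an asymptotic behaviour incompatible with (\ref{equ_period-matrix}). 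Hence $D = A^{-T}$, and the Möbius action reduces to $T_f \mapsto A T_f A^{T} + A' A^{T}$. The leading-order piece $A N_B A^{T}$ must again be of the form $\begin{pmatrix} 0 & 0 \\ 0 & B' \end{pmatrix}$; writing $A = \begin{pmatrix} A_{11} & A_{12} \\ A_{21} & A_{22} \end{pmatrix}$ in the block decomposition adapted to $g = g_1 + g_2$, positive-definiteness of $B$ forces the top-right block $A_{12}$ to vanish, whence $\det A = \det(A_{11})\det(A_{22}) = \pm 1$ gives $A_{22} \in \mathrm{GL}_{g_2}(\mathbb{Z})$. One reads off $B' = A_{22} B A_{22}^{T}$.

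The main obstacle is this asymptotic analysis: one must rule out non-parabolic elements of $\mathrm{Sp}_{2g}(\mathbb{Z})$ by using that the direction of degeneration encoded by $N_B$ is rigid (positive-definiteness of $B$ makes the cusp direction in $\mathbb{H}_g$ well-defined). Once this is established, the relation $B' = A_{22} B A_{22}^{T}$ is exactly the $\mathrm{GL}_{g_2}(\mathbb{Z})$-congruence witnessing the isomorphism $(B\mathbb{Z}^{g_2}, [\cdot,\cdot]_B) \cong (B'\mathbb{Z}^{g_2}, [\cdot,\cdot]_{B'})$, completing the proof.
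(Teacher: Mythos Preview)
Your route via the $\mathrm{Sp}_{2g}(\mathbb{Z})$-action is sound in outline and reaches the right conclusion, but the step ``an asymptotic comparison \ldots\ forces $C=0$'' is not correct as stated. The blow-up of the denominator only tells you $CN_B=0$; together with the symmetric condition $N_{B'}C=0$ (coming from the numerator, or more cleanly from conjugating the monodromy), in the block decomposition $g=g_1+g_2$ this kills $C_{12},C_{21},C_{22}$ but leaves the $g_1\times g_1$ block $C_{11}$ unconstrained. A nonzero $C_{11}$ corresponds to a genuine M\"obius action on the bounded piece $S_1(t)\in\mathbb{H}_{g_1}$ and is perfectly compatible with both period maps being in the normal form~(\ref{equ_period-matrix}), so the reduction to $T_f\mapsto AT_fA^T+A'A^T$ is not valid in general. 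The argument is easily repaired without it: conjugating the unipotent monodromy $\left(\begin{smallmatrix}I&N_B\\0&I\end{smallmatrix}\right)$ by $M$ to $\left(\begin{smallmatrix}I&N_{B'}\\0&I\end{smallmatrix}\right)$ also yields $AN_B=N_{B'}D$, whence $A_{12}=0$, $D_{21}=0$ and $A_{22}B=B'D_{22}$; the symplectic relation $A^TD-C^TA'=I$ then gives $A_{22}^TD_{22}=I_{g_2}$ on the $(2,2)$-block, and hence $B'=A_{22}BA_{22}^T$ with $A_{22}\in\mathrm{GL}_{g_2}(\mathbb{Z})$, which is exactly the congruence you need.

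For comparison, the paper bypasses the M\"obius formalism: it writes the change of framing as $R\in\mathrm{GL}_{2g}(\mathbb{Z})$ acting on the lattice $\mathbb{Z}^g+T_f(t)\mathbb{Z}^g$ and preserving the polarization form $(\mathrm{Im}\,T_f)^{-1}$, then evaluates this form on the last $g_2$ lattice vectors of the $\tau$-part. Separating the multi-valued piece $\frac{m\arg t}{\pi}B$ from the single-valued remainder yields $\ltrans{(R_2b_1)}B'R_2b_2=\ltrans{b_1}Bb_2$ directly, without ever needing to determine which blocks of the transition matrix vanish. Your approach is arguably more conceptual (it identifies the relevant parabolic), but requires more bookkeeping to get right.
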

\begin{proof}
	Two choices $T_f, T'_f\colon \Delta^*\to \mathbb{H}_g$ of a multi-valued holomorphic period map differ by a constant coordinate change of the lattice $\Lambda=H_1(\mathcal{X}_t,\mathbb{Z})$, which respects the polarization. That means, there exists an isomorphism of lattices
	$$\alpha\colon \mathbb{Z}^g+T_f(t)\mathbb{Z}^g\to \mathbb{Z}^g+T'_f(t)\mathbb{Z}^g,$$
	such that 
	\begin{align*}
	&\ltrans{\alpha(\begin{pmatrix} \mathrm{Id}_g & T_f(t)\end{pmatrix}a)}(\mathrm{Im}~T'_f(t))^{-1}\alpha(\begin{pmatrix} \mathrm{Id}_g & T_f(t)\end{pmatrix}b)\\
	&=\ltrans{(\begin{pmatrix} \mathrm{Id}_g & T_f(t)\end{pmatrix}a)}(\mathrm{Im}~T_f(t))^{-1}\begin{pmatrix} \mathrm{Id}_g & T_f(t)\end{pmatrix}b
	\end{align*}
	for any $a,b\in\mathbb{Z}^{2g}$
	and the isomorphism can be represented by a constant matrix $R\in\mathrm{GL}_{2g}(\mathbb{Z})$ such that 
	$$\alpha\left(\begin{pmatrix} \mathrm{Id}_g & T_f(t)\end{pmatrix}a \right)=\begin{pmatrix} \mathrm{Id}_g & T'_f(t)\end{pmatrix} R a.$$
	
	Now let $T_f$ and $T'_f$ be both of the form as in Equation (\ref{equ_period-matrix}). By taking a common multiple, we may assume that the corresponding $m$'s are the same. Concretely, we write
	\begin{align*}
	T_f(t^m)=\begin{pmatrix}
	S_1(t)&S_3(t)\\
	\ltrans{S_3(t)}&\frac{m\log t}{2\pi i}B + S_2(t)
	\end{pmatrix},\quad T'_f(t^m)=\begin{pmatrix}
	S'_1(t)&S'_3(t)\\
	\ltrans{S'_3(t)}&\frac{m\log t}{2\pi i}B' + S'_2(t)
	\end{pmatrix}
	\end{align*}
	As $R$ is constant, it has to respect this form. Hence, we have $R=\begin{pmatrix}R_1&0\\ 0&R_2\end{pmatrix}$ for some $R_2\in\mathrm{GL}_{g_2}(\mathbb{Z})$.
	For $b_1,b_2\in\mathbb{Z}^{g_2}$ we compute
	\begin{align*}
	&\frac{m\arg t}{\pi}\ltrans{(R_2b_1)}B' R_2b_2+2\ltrans{(R_2b_1)}(\mathrm{Re}~S'_2(t)) R_2b_2\\
	=&2\ltrans{\begin{pmatrix}0\\ R_2b_1\end{pmatrix}}(\mathrm{Re}~T'_f(t^m)) \begin{pmatrix}0\\ R_2b_2\end{pmatrix}\\
	=&\mathrm{Im}~\left(\ltrans{\left(T'_f(t^m)\begin{pmatrix}0\\ R_2b_1\end{pmatrix} \right)}(\mathrm{Im}~T'_f(t^m))^{-1} T'_f(t^m)\begin{pmatrix}0\\ R_2b_2\end{pmatrix}\right)\\
	=&\mathrm{Im}~\left(\ltrans{\left(\begin{pmatrix} \mathrm{Id}_g & T'_f(t^m)\end{pmatrix}R\begin{pmatrix} 0\\ b_1\end{pmatrix}\right)}(\mathrm{Im}~T'_f(t^m))^{-1}\begin{pmatrix} \mathrm{Id}_g & T'_f(t^m)\end{pmatrix}R\begin{pmatrix} 0\\ b_2\end{pmatrix}\right)\\
	=&\mathrm{Im}~\left(\ltrans{\alpha\left(\begin{pmatrix} \mathrm{Id}_g & T_f(t^m)\end{pmatrix}\begin{pmatrix} 0\\ b_1\end{pmatrix}\right)}(\mathrm{Im}~T'_f(t^m))^{-1}\alpha\left(\begin{pmatrix} \mathrm{Id}_g & T_f(t^m)\end{pmatrix}\begin{pmatrix} 0\\ b_2\end{pmatrix}\right)\right)\\
	=&\mathrm{Im}~\left(\ltrans{\left(\begin{pmatrix} \mathrm{Id}_g & T_f(t^m)\end{pmatrix}\begin{pmatrix} 0\\ b_1\end{pmatrix}\right)}(\mathrm{Im}~T_f(t^m))^{-1}\left(\begin{pmatrix} \mathrm{Id}_g & T_f(t^m)\end{pmatrix}\begin{pmatrix} 0\\ b_2\end{pmatrix}\right)\right)\\
	=&\frac{m\arg t}{\pi}\ltrans{b_1}B b_2+2\ltrans{b_1}(\mathrm{Re}~S_2(t)) b_2,
	\end{align*}
	where the last equality follows analogously to the first three equations. Comparing the multi-valued parts, we obtain $\ltrans{(R_2b_1)}B' R_2b_2=\ltrans{b_1}B b_2$. Thus, the map
	$$B\mathbb{Z}^{g_2}\to B'\mathbb{Z}^{g_2}, \quad Bb\to B'R_2b$$
	is an isomorphism of Euclidean lattices, where we define the inner product associated to $B\mathbb{Z}^{g_2}$ and $B'\mathbb{Z}^{g_2}$ as in Section \ref{sec_tropical}. Hence, we obtain an isomorphism of the corresponding polarized real tori. This proves the lemma.
	
\end{proof}

For any section $z\colon \Delta^*\to \mathscr{A}$ of $f$ we define
$$\mathrm{trop}(z)\equiv-\lim_{t\to 0}\frac{2\pi\mathrm{Im}(\widetilde{z}_2(t))}{\log |t|}\mod B\mathbb{Z}^{g_2}\in\Sigma_f,$$
where $\widetilde{z}=(\widetilde{z}_1,\widetilde{z}_2)\colon \Delta^*\to \mathbb{C}^{g_1}\times\mathbb{C}^{g_2}$ is a multi-valued holomorphic map lifting $z$.
This is well-defined, since replacing $\widetilde{z}$ by $\widetilde{z}+T_f(t)n$ for any $n=(n_1,n_2)\in\mathbb{Z}^{g_1}\times\mathbb{Z}^{g_2}$ yields
\begin{align*}
\mathrm{trop}(z)&\equiv-\lim_{t^m\to 0}\frac{2\pi \mathrm{Im}\left(\widetilde{z}_2(t^m)+\ltrans{S_3(t)}n_1+\frac{m\log t}{2\pi i} B n_2+S_2(t)n_2\right)}{\log |t^m|}\\
&\equiv-\lim_{t\to 0}\frac{2\pi\mathrm{Im}(\widetilde{z}_2(t))}{\log |t|}+Bn_2\equiv \mathrm{trop}(z)\mod B\mathbb{Z}^{g_2}.
\end{align*}
If the section $z\colon \Delta^*\to \mathscr{A}$ has a lift $\widetilde{z}\colon \Delta^*\to \mathbb{C}^g$ of the form $\widetilde{z}(t)=a+T_f(t)b$ for two real-valued vectors $a,b\in\mathbb{R}^g$ with $b=(b_1,b_2)\in\mathbb{R}^{g_1}\times\mathbb{R}^{g_2}$, we obtain by a similar computation
$$\mathrm{trop}(z)\equiv B b_2 \mod B\mathbb{Z}^{g_2}\in\Sigma_f.$$
In particular, by Equation (\ref{equ_valuetheta}) we may express the value of the modified Riemann theta function $\|\Psi_f\|$ associated to $\Sigma_f$ in $\mathrm{trop}(z)\in \Sigma_f$ as 
\begin{align}\label{equ_valuethetatrop}
\|\Psi_f\|(\mathrm{trop}(z))=\|\Psi_f\|(Bb_2)=\tfrac{1}{2}\min_{n\in\mathbb{Z}^{g_2}}\ltrans{(b_2+n)}B(b_2+n).
\end{align}

\section{Degeneration behavior of the theta function}\label{sec_theta}
In this section we would like to study the degeneration behavior of the normed Riemann $\theta$-function $\|\theta\|$ in families of abelian varieties. 
The Riemann theta function is given by
$$\theta\colon \mathbb{H}_g\times \mathbb{C}^g\to \mathbb{C},\quad \theta(\tau,z)=\sum_{n\in\mathbb{Z}^g}\exp(\pi i \ltrans{n}\tau n+2\pi i\ltrans{n}z).$$
We associate to it a real-valued version $\|\theta\|\colon \mathbb{H}_g\times \mathbb{C}^g\to \mathbb{R}_{\ge0}$ by
\begin{align*}
\|\theta\|(\tau,z)=\det(\mathrm{Im}~\tau)^{1/4}\exp(-\pi\ltrans(\mathrm{Im}~z)(\mathrm{Im}~\tau)^{-1}(\mathrm{Im}~z))\cdot |\theta|(z),
\end{align*}
which only depends on the class of $z$ in $\mathbb{C}^g/(\mathbb{Z}^g+\tau\mathbb{Z}^g)$ and which is invariant under the action of the symplectic group $\mathrm{Sp}_{2g}(\mathbb{Z})$. If $z=a+\tau b$ for some $a,b\in\mathbb{R}^g$, this can be written as
$$\|\theta\|(\tau,a+\tau b)=\det(\mathrm{Im}~\tau)^{1/4}\left|\sum_{n\in\mathbb{Z}^g}\exp(\pi i\ltrans{(n+b)}\tau(n+b)+2\pi i\ltrans{n}a)\right|.$$

Before we study the degeneration of $\|\theta\|$, we compute the limit of the determinant of the imaginary part of the period matrix. For two continuous functions $h_1, h_2\colon \Delta^*\to \mathbb{R}$ we write $h_1\sim h_2$ if $h_1-h_2$ can be extended to a continuous function on $\Delta$. 
\begin{Lem}\label{lem_period-limit}
	Let $T_f\colon \Delta^*\to\mathbb{H}_g$ be a multi-valued period map as in Equation (\ref{equ_period-matrix}). Then it holds
	$$\lim_{t\to 0}\frac{\det(\mathrm{Im}~T_f(t))}{(-\log|t|)^{\dim\Sigma_f}}=\left(\tfrac{1}{2\pi}\right)^{\dim\Sigma_f}\det(\mathrm{Im}~S_1(0))\det(B)>0.$$
	In particular, we have $\log\det(\mathrm{Im}~T_f(t))\sim \dim\Sigma_f\cdot \log(-\log|t|)$.
\end{Lem}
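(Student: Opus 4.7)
The plan is to reduce everything to a Schur-complement computation on the block form of $\mathrm{Im}~T_f(t^m)$ given by (\ref{equ_period-matrix}), and then observe that in the $g_2\times g_2$ block the term $-\frac{m\log|t|}{2\pi}B$ dominates the bounded correction coming from $S_2(t)$ and the Schur complement of $S_1$.

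Concretely, first I would write out
$$\mathrm{Im}~T_f(t^m)=\begin{pmatrix} \mathrm{Im}~S_1(t) & \mathrm{Im}~S_3(t) \\ \ltrans{(\mathrm{Im}~S_3(t))} & -\tfrac{m\log|t|}{2\pi}B+\mathrm{Im}~S_2(t)\end{pmatrix},$$
using that $\mathrm{Im}\frac{m\log t}{2\pi i}=-\frac{m\log|t|}{2\pi}$. Since $S_1\colon\Delta\to\mathbb{H}_{g_1}$ extends continuously with $S_1(0)\in\mathbb{H}_{g_1}$, the matrix $\mathrm{Im}~S_1(t)$ is positive definite for all $t\in\Delta$ close to $0$, so I can apply the block-determinant (Schur complement) identity to obtain
$$\det(\mathrm{Im}~T_f(t^m))=\det(\mathrm{Im}~S_1(t))\cdot\det\!\Bigl(-\tfrac{m\log|t|}{2\pi}B+R(t)\Bigr),$$
where $R(t)=\mathrm{Im}~S_2(t)-\ltrans{(\mathrm{Im}~S_3(t))}(\mathrm{Im}~S_1(t))^{-1}\mathrm{Im}~S_3(t)$ is bounded near $t=0$.

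Next, I would factor out the growing quantity $u:=-\frac{m\log|t|}{2\pi}$ from the second determinant and write
$$\det\!\Bigl(uB+R(t)\Bigr)=u^{g_2}\det\!\Bigl(B+\tfrac{1}{u}R(t)\Bigr),$$
so that as $t\to 0$ (equivalently $u\to\infty$) the right-hand side tends to $u^{g_2}\det(B)$ by continuity of the determinant and boundedness of $R(t)$. Dividing by $(-\log|t^m|)^{g_2}=(m(-\log|t|))^{g_2}=(2\pi u)^{g_2}$ gives
$$\lim_{t\to 0}\frac{\det(\mathrm{Im}~T_f(t^m))}{(-\log|t^m|)^{g_2}}=\det(\mathrm{Im}~S_1(0))\cdot\det(B)\cdot(2\pi)^{-g_2}.$$
Rewriting the limit in terms of the variable $s=t^m$ gives the claimed formula, with $g_2=\dim\Sigma_f$. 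Positivity of the limit follows from $\det(\mathrm{Im}~S_1(0))>0$ (since $S_1(0)\in\mathbb{H}_{g_1}$) and from $B$ being symmetric positive-definite, both of which are built into the setup of Section \ref{sec_families}. The final ``in particular'' assertion is then a direct logarithm of the limit formula.

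I do not expect a genuine obstacle here; the only subtlety is the justification that $\mathrm{Im}~S_1(t)$ stays invertible (and in fact positive definite) on a neighborhood of $0$, which I would extract from the standing hypothesis $S_1\colon\Delta\to\mathbb{H}_{g_1}$ combined with continuity. Everything else is a standard Schur-complement argument followed by a limit.
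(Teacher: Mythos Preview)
Your argument is correct. The paper's own proof takes a slightly different but equally elementary route: instead of invoking the Schur complement, it simply expands $\det(\mathrm{Im}~T_f(t^m))$ as a polynomial of degree $g_2$ in $\log|t|$, identifies the leading coefficient as $(2\pi)^{-g_2}\det(\mathrm{Im}~S_1(t))\det(B)$, and notes that the lower-order coefficients are continuous on $\Delta$. Your Schur-complement organization is a bit cleaner in that it makes the factor $\det(\mathrm{Im}~S_1(t))$ appear exactly rather than only in the top-degree term, and it avoids having to name the auxiliary functions $f_j(t)$; the paper's expansion, on the other hand, sidesteps the need to check invertibility of $\mathrm{Im}~S_1(t)$ near $0$. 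Both reach the same limit with the same amount of work.
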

\begin{proof}
	Using Equation (\ref{equ_period-matrix}), we may expand the determinant of $\mathrm{Im}~T_f(t^m)$ by
	$$\det(\mathrm{Im}~T_f(t^m))=\left(\tfrac{1}{2\pi}\right)^{g_2}\det(\mathrm{Im}~S_1(t))\det(B)(-\log|t^m|)^{g_2}+\sum_{j=0}^{g_2-1}f_j(t)(\log|t|)^j,$$
	for some real-valued continuous functions $f_j$ on $\Delta$. As 
	$\mathrm{Im}~S_1(0)$ and $B$ are positive-definite, we obtain the first assertion of the lemma with $t$ replaced by $t^m$ after dividing by $(-\log|t^m|)^{g_2}$ on both sides. The second assertion follows by taking logarithms.
\end{proof}
Our main result in this section is the following proposition, which describes the degeneration behavior of $\|\theta\|$ and which will lead to a proof of Theorem \ref{mainthm} in the next section.
\begin{Pro}\label{proposition}
	Let $f\colon \mathscr{A}\to \Delta^*$ be a smooth family of principally polarized complex abelian varieties of dimension $g\ge 1$ and $T_f\colon \Delta^*\to \mathbb{H}_g$ a period map corresponding to this family. If $z\colon \Delta^*\to \mathscr{A}$ is a section of $f$ having a multi-valued lift $\widetilde{z}\colon \Delta^*\to \mathbb{C}^g$ given by $\widetilde{z}(t)=a+T_f(t)b$ for two real-valued vectors $a,b\in \mathbb{R}^g$, then there exists a non-zero real analytic function $\alpha\colon \mathbb{R}^{2g}\to \mathbb{R}_{\ge 0}$, such that
	 \begin{align}\label{equ_thetalimit}
	 \lim_{t\to 0}\left(\|\theta\|(T_f(t),z(t))\cdot|t|^{-\|\Psi\|(\mathrm{trop}(z))}(-\log |t|)^{-\dim \Sigma_f/4}\right)^2=\alpha(a,b).
	 \end{align}
\end{Pro}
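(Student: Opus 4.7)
The plan is to carry out an explicit asymptotic analysis of the theta series based on the period decomposition (\ref{equ_period-matrix}). First I would eliminate the exponential prefactor in $\|\theta\|^2$ via the quasi-periodicity of $\theta$: for $z=a+\tau b$ with $a,b\in\mathbb{R}^g$,
$$\theta(\tau,a+\tau b)=\exp(-\pi i\,{}^t b\tau b)\sum_{n\in\mathbb{Z}^g}\exp\!\bigl(\pi i\,{}^t(n+b)\tau(n+b)+2\pi i\,{}^t n a\bigr),$$
and $|\exp(-\pi i\,{}^t b\tau b)|^2=\exp(2\pi\,{}^t b\,\mathrm{Im}(\tau)\,b)$ cancels the exponential factor in the definition of $\|\theta\|^2$, leaving the clean identity
$$\|\theta\|^2(\tau,a+\tau b)=\det(\mathrm{Im}\,\tau)^{1/2}\,|\widetilde\theta(\tau)|^2,\qquad \widetilde\theta(\tau):=\sum_{n\in\mathbb{Z}^g}\exp\!\bigl(\pi i\,{}^t(n+b)\tau(n+b)+2\pi i\,{}^t n a\bigr).$$
Lemma~\ref{lem_period-limit} then handles the factor $\det(\mathrm{Im}\,T_f(t))^{1/2}/(-\log|t|)^{g_2/2}$, whose limit is $(2\pi)^{-g_2/2}(\det\mathrm{Im}\,S_1(0)\cdot\det B)^{1/2}$; the bulk of the work lies in analyzing $\widetilde\theta(T_f(t))$ as $t\to 0$.

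Splitting the summation as $n=(n_1,n_2)\in\mathbb{Z}^{g_1}\times\mathbb{Z}^{g_2}$ and substituting (\ref{equ_period-matrix}), the $(2,2)$-block of $T_f$ contributes to the exponent a term $c(n_2)\log t$, where
$$c(n_2):=\tfrac{1}{2}\,{}^t(n_2+b_2)\,B\,(n_2+b_2),$$
producing a multi-valued factor $t^{c(n_2)}$ of modulus $|t|^{c(n_2)}$. After passing, if necessary, to the cover $t\mapsto t^{1/m}$ that makes $S_1,S_2,S_3$ single-valued in the parameter and collecting the remaining holomorphic contributions into $F_{n_2}(t)$, one obtains
$$\widetilde\theta(T_f(t))=\sum_{n_2\in\mathbb{Z}^{g_2}}t^{c(n_2)}\,F_{n_2}(t),$$
where each $F_{n_2}(t)$ is a convergent theta series in the $n_1$-variables at modulus $S_1(t)$ with characteristic determined by $b_1,b_2,S_2(t),S_3(t),a_1$. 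By (\ref{equ_valuethetatrop}), the minimum $c_{\min}:=\min_{n_2}c(n_2)$ equals $\|\Psi\|(\mathrm{trop}(z))$.

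Putting things together, I expand
$$|\widetilde\theta(T_f(t))|^2\,|t|^{-2c_{\min}}=\sum_{n_2,n_2'\in\mathbb{Z}^{g_2}}|t|^{c(n_2)+c(n_2')-2c_{\min}}\,e^{i(c(n_2)-c(n_2'))\arg t}\,\overline{F_{n_2'}(t)}\,F_{n_2}(t).$$
Setting $N:=\{n_2:c(n_2)=c_{\min}\}$, every term with $n_2\notin N$ or $n_2'\notin N$ carries a strictly positive power of $|t|$ and vanishes as $t\to 0$; the surviving contribution is $\bigl|\sum_{n_2\in N}F_{n_2}(0)\bigr|^2$. Combining with Lemma~\ref{lem_period-limit} identifies
$$\alpha(a,b)=\frac{(\det\mathrm{Im}\,S_1(0)\cdot\det B)^{1/2}}{(2\pi)^{g_2/2}}\cdot\biggl|\sum_{n_2\in N}F_{n_2}(0)\biggr|^2,$$
which is real analytic in $(a,b)$ on the open dense complement of the locus where the minimizing set $N$ changes. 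Non-vanishing is immediate: specializing to $b_2=0$ gives $N=\{0\}$, and $F_0(0)$ collapses to a $\theta$-series-with-characteristic in $(a_1,b_1)$ at modulus $S_1(0)\in\mathbb{H}_{g_1}$, which is not identically zero.

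The main obstacle is justifying the termwise passage to the limit in the infinite double sum, for which one needs a $t$-uniform absolutely convergent majorant. This follows from the Gaussian lower bound $c(n_2)\gtrsim\|n_2\|^2$ provided by positive definiteness of $B$, together with uniform Gaussian decay of $|F_{n_2}(t)|$ in $n_2$ coming from positive definiteness of $\mathrm{Im}\,S_1(t)$ in a neighborhood of $t=0$. Once these standard estimates are in place, the asymptotic identification above goes through directly.
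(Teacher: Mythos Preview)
Your overall strategy is the paper's: rewrite $\|\theta\|^2(\tau,a+\tau b)$ as $\det(\mathrm{Im}\,\tau)^{1/2}$ times the squared modulus of a shifted theta sum, split $n=(n_1,n_2)$, sum over $n_1$ to recover a classical theta function at modulus $S_1(t)$, extract the factor $|t|^{c(n_2)}$ with $c(n_2)=\tfrac12\,{}^t(n_2+b_2)B(n_2+b_2)$, and identify the limit as a finite sum over the minimizing set $N=N_1$. The paper's computation (\ref{computation})--(\ref{alpha}) does exactly this, and your identification of $\alpha$ and the non-vanishing argument via $b_2=0$ agree with the paper's.

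One point in your uniform majorant is wrong, however. You claim ``uniform Gaussian decay of $|F_{n_2}(t)|$ in $n_2$ coming from positive definiteness of $\mathrm{Im}\,S_1(t)$''. But $\mathrm{Im}\,S_1(t)$ is the $g_1\times g_1$ block and only guarantees convergence of the inner $n_1$-sum; it says nothing about behavior in $n_2$. In fact $|F_{n_2}(t)|$ behaves like $\exp\bigl(\pi\,{}^t(n_2+b_2)\,S(t)\,(n_2+b_2)\bigr)$ with $S(t)={}^t(\mathrm{Im}\,S_3)(\mathrm{Im}\,S_1)^{-1}(\mathrm{Im}\,S_3)-\mathrm{Im}\,S_2$, whose sign at $t=0$ is not controlled, so $|F_{n_2}(t)|$ may well \emph{grow} in $n_2$. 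The paper's remedy is to absorb this possible growth into the factor $|t|^{c(n_2)}$: an eigenvalue comparison gives $-\tfrac{m\log|t|}{4\pi}B-S(t)\ge \epsilon\,\mathrm{Id}_{g_2}$ for $|t|$ small, producing a summable Gaussian majorant for the tail outside a fixed finite set $N_2$. Your estimate is salvageable along these lines, but not as written.

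On real analyticity you are in fact more careful than the paper. You correctly flag that your formula for $\alpha(a,b)$ is real analytic only away from the walls where the minimizing set $N$ jumps. The paper writes down the same expression, $\alpha=\mathrm{const}\cdot|\beta|^2$ with $\beta$ the finite sum (\ref{alpha}) over $N_1$, and simply asserts global real analyticity in $(a,b)$ without addressing the $b_2$-dependence of $N_1$. For the application to Theorem~\ref{mainthm} only the measure-zero property of $\{\alpha=0\}$ and local integrability of $\log\alpha$ are used, and these already follow from the piecewise real-analytic picture you describe.
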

\begin{proof}
	As the class of $a+T_f(t)b$ in $\mathscr{A}_t$ does not depend on the choice of the period map $T_f$, the value of
	$$\log\|\theta\|(T_f(t),z(t))$$
	does not depend on the choice of $T_f$, either. Hence, we may assume, that $T_f$ has the form as in Equation (\ref{equ_period-matrix}) and we use the notation as in Section \ref{sec_families}.
	Further, we shortly write $\mathbf{e}(x)=\exp(2\pi i x)$ for any $x\in\mathbb{C}$.
	We denote $a=(a_1,a_2)\in\mathbb{R}^{g_1}\times\mathbb{R}^{g_2}$ and $b=(b_1,b_2)\in\mathbb{R}^{g_1}\times\mathbb{R}^{g_2}$ for the splittings of the vectors $a$ and $b$ into their first $g_1$ and their last $g_2$ coordinates. 
	
	First, we would like to show, that the expression in the brackets on the left hand side of Equation (\ref{equ_thetalimit}) is bounded around $0\in\Delta$.
		For this purpose, we compute
		\begin{align}\label{computation}
		&\det(\mathrm{Im}~T_f(t^m))^{-1/4}\|\theta\|(T_f(t^m),z(t^m))=\left|\sum_{n\in\mathbb{Z}^{g}}\mathbf{e}\left(\tfrac{1}{2}\ltrans{(n+b)} T_f(t^m) (n+b) +\ltrans{n} a\right)\right|\\
		&=\left|\sum_{n_1\in\mathbb{Z}^{g_1},n_2\in\mathbb{Z}^{g_2}}t^{\frac{m}{2}(n_2+b_2)B(n_2+b_2)}\cdot\mathbf{e}(\ltrans{(n_1+b_1)}S_3(t)(n_2+b_2)+\ltrans{n_1}a_1+\ltrans{n_2}a_2)\right.\nonumber\\
		&\left. \qquad\quad\qquad\qquad\times\mathbf{e}\left(\tfrac{1}{2}\left(\ltrans{(n_1+b_1)} S_1(t) (n_1+b_1)+\ltrans{(n_2+b_2)}S_2(t)(n_2+b_2)\right)\right)\right|\nonumber\\
		&=\left|\sum_{n\in\mathbb{Z}^{g_2}}t^{\frac{m}{2}(n+b_2)B(n+b_2)}\theta(S_1(t),a_1+S_3(t)(n+b_2)+S_1(t)b_1)\right.\nonumber\\
		&\left.\qquad\qquad\times\mathbf{e}\left(\tfrac{1}{2}(\ltrans{b_1}S_1(t)b_1+\ltrans{(n+b_2)}S_2(t)(n+b_2))+\ltrans{b_1}S_3(t)(n+b_2)+\ltrans{n}a_2\right)\right|\nonumber\\
		&\le\sum_{n\in\mathbb{Z}^{g_2}}|t|^{\frac{m}{2}(n+b_2)B(n+b_2)}\det(\mathrm{Im}~S_1(t))^{-\frac{1}{4}}\|\theta\|(S_1(t),a_1+S_3(t)(n+b_2)+S_1(t)b_1)\nonumber\\
		&\qquad\qquad\times\exp(\pi\ltrans{(n+b_2)}(\ltrans{(\mathrm{Im}~S_3(t))}(\mathrm{Im}~S_1(t))^{-1}(\mathrm{Im}~S_3(t))-\mathrm{Im}~S_2(t))(n+b_2)),\nonumber
		\end{align}
		where the last inequality just follows from the triangle inequality and the definition of $\|\theta\|$.
		As $\det(\mathrm{Im}~S_1(t))^{-1/4}\|\theta\|(S_1(t),a_1+S_3(t)(n+b_2)+S_1(t)b_1)$ is continuous and well-defined at $t=0$, there exists an open neighborhood $0\in U_1\subseteq \Delta$ and a constant $M_1\in\mathbb{R}$ such that
		$$\det(\mathrm{Im}~S_1(t))^{-1/4}\|\theta\|(S_1(t),a_1+S_3(t)(n+b_2)+S_1(t)b_1)\le M_1$$
		for all $t\in U_1$. Note that $\|\theta\|(S_1(t),\cdot)$ can be seen as a continuous function on the compact space $\mathbb{C}^{g_1}/(\mathbb{Z}^{g_1}+S_1(t)\mathbb{Z}^{g_1})$, where it has to attain its supremum. Hence, we can choose $M_1$ independently of the choice of $n\in\mathbb{Z}^{g_2}$. 
		
		We shortly write $T=\frac{1}{2}\min_{n\in\mathbb{Z}^{g_2}}(n+b_2)B(n+b_2)$. Note, that $T=\|\Psi_f\|(\mathrm{trop}(z))$ by Equation (\ref{equ_valuethetatrop}). We define the bounded, discrete and hence finite sets $$N_k=\left\{n\in\mathbb{Z}^{g_2}~|~\tfrac{1}{2}(n+b_2)B(n+b_2)\le kT\right\}.$$
		Further, we shortly write $S(t)=\ltrans{(\mathrm{Im}~S_3(t))}(\mathrm{Im}~S_1(t))^{-1}(\mathrm{Im}~S_3(t))-\mathrm{Im}~S_2(t)$.
		If we assume $t\in U_1\setminus\{0\}$, the above calculation can be continued by
		\begin{align}\label{continuation}
		&\det(\mathrm{Im}~T_f(t^m))^{-1/4}\|\theta\|(T_f(t^m),z(t^m))\cdot |t|^{-mT}\\
		&\le M_1\left(\sum_{n\in N_2}|t|^{m\left(\frac{1}{2}(n+b_2)B(n+b_2)-T\right)}\exp(\pi\ltrans{(n+b_2)}S(t)(n+b_2))\right.\nonumber\\
		&\left.\qquad +\sum_{n\in \mathbb{Z}^{g_2}\setminus N_2}|t|^{m\left(\frac{1}{2}(n+b_2)B(n+b_2)-T\right)}\exp(\pi\ltrans{(n+b_2)}S(t)(n+b_2))\right)\nonumber
		\end{align}
		Since the first sum is a finite sum and its addends are continuous and well-defined in $t=0$, there exists an open neighborhood $0\in U_2\subseteq U_1$ and a constant $M_2$ such that
		$$  \sum_{n\in N_2}|t|^{m\left(\frac{1}{2}(n+b_2)B(n+b_2)-T\right)}\exp(\pi\ltrans{(n+b_2)}S(t)(n+b_2))\le M_2$$
		for all $t\in U_2$.
		
		To estimate the second sum, let us first remark, that we have 
		$$\tfrac{1}{2}(n+b_2)B(n+b_2)-T> \tfrac{1}{4}(n+b_2)B(n+b_2)$$
		for $n\in\mathbb{Z}^{g_2}\setminus N_2$.
		As $S(t)$ and $B$ are symmetric, their eigenvalues are real. We denote $\lambda_{\mathrm{max}}(S(t))$ for the greatest eigenvalue of $S(t)$ and $\lambda_{\mathrm{min}}(B)$ for the smallest eigenvalue of $B$. We fix some $\epsilon>0$. As $B$ is positive definite, it can be checked by a simultaneous diagonalization, that $\lambda\cdot B-S(t)\ge \epsilon\cdot \mathrm{Id}_n$ for all $\lambda\ge \frac{\lambda_{\mathrm{max}}(S(t))+\epsilon}{\lambda_{\mathrm{min}}(B)}$. By continuity there exists an open neighborhood $0\in U_3\subseteq U_2$ with $\lambda\cdot B-S(t)\ge \epsilon\cdot \mathrm{Id}_n$ for all $t\in U_3$ and all $\lambda\ge \frac{\lambda_{\mathrm{max}}(S(0))+2\epsilon}{\lambda_{\mathrm{min}}(B)}$. By shrinking $U_3$, we may assume, that 
		$$-\log|t|\ge \frac{4\pi}{m}\left(\frac{\lambda_{\mathrm{max}}(S(0))+2\epsilon}{\lambda_{\mathrm{min}}(B)}\right)$$
		for all $t\in U_3\setminus \{0\}$.
		Now we obtain for the second sum in (\ref{continuation})
		\begin{align*}
		&\sum_{n\in \mathbb{Z}^{g_2}\setminus N_2}|t|^{m(\frac{1}{2}(n+b_2)B(n+b_2)-T)}\exp(\pi\ltrans{(n+b_2)}S(t)(n+b_2))\\
		&<\sum_{n\in \mathbb{Z}^{g_2}\setminus N_2}\exp\left(\log|t|\tfrac{m}{4}\ltrans{(n+b_2)}B(n+b_2)\right)\exp(\pi\ltrans{(n+b_2)}S(t)(n+b_2))\\
		&=\sum_{n\in \mathbb{Z}^{g_2}\setminus N_2}\exp\left(-\pi\ltrans{(n+b_2)}\left(-\tfrac{m\log|t|}{4\pi}B-S(t)\right)(n+b_2)\right)\\
		&\le\sum_{n\in \mathbb{Z}^{g_2}}\prod_{i=1}^{g_2} \exp\left(-\pi \epsilon(n_i+b_{2,i})^2\right)\le \left(2\sum_{n=0}^\infty\exp(-\pi\epsilon n)\right)^{g_2}=\left(\frac{2}{1-e^{-\pi\epsilon}}\right)^{g_2}
		\end{align*}
		for all $t\in U_3\setminus\{0\}$. Setting $M=M_1\left(M_2+\left(\frac{2}{1-e^{-\pi\epsilon}}\right)^{g_2}\right)$, we conclude
		$$\det(\mathrm{Im}~T_f(t^m))^{-1/4}\|\theta\|(T_f(t^m),z(t^m))\cdot |t|^{-mT}
		\le M$$
		for all $t\in U_3\setminus\{0\}$. 
		Hence, we deduce from the computation in (\ref{computation}), that the left hand side tends to $|\beta(a,b)|$, where
		 \begin{align}\label{alpha}
		 \beta(a,b)=&\sum_{n\in N_1}\mathbf{e}\left(\tfrac{1}{2}(\ltrans{b_1}S_1(0)b_1+\ltrans{(n+b_2)}S_2(0)(n+b_2))+\ltrans{b_1}S_3(0)(n+b_2)+\ltrans{n}a_2\right)\\
		 &\qquad\times\theta(S_1(0),a_1+S_3(0)(n+b_2)+S_1(0)b_1).\nonumber
		 \end{align}
		 
		 Note, that the real part and the imaginary part of $\beta(a,b)$ are real analytic functions in $(a,b)\in\mathbb{R}^{2g}$ and that $\beta$ is not identically zero, as $\theta(S_1(0),\cdot)$ is a non-zero holomorphic function. Hence, the function
		 $$\alpha(a,b)=\left(\frac{\det(\mathrm{Im}~S_1(0)B)}{(2\pi)^{\dim\Sigma_f}}\right)^{1/2}(\mathrm{Re}~\beta(a,b))^2+(\mathrm{Im}~\beta(a,b))^2)$$
		 is a non-negative and non-zero real analytic function on $\mathbb{R}^{2g}$ and it holds
		 \begin{align*}
		 &\lim_{t\to 0}\left(\|\theta\|(T_f(t),z(t))|t|^{-\|\Psi\|(\mathrm{trop}(z))}(-\log |t|)^{-\dim \Sigma_f/4}\right)^2\\
		 =&\left(\frac{\det(\mathrm{Im}~S_1(0)B)}{(2\pi)^{\dim\Sigma_f}}\right)^{1/2}\lim_{t^m\to 0}\left(\|\theta\|(T_f(t^m),z(t^m))|t|^{-mT}\det(\mathrm{Im}~T_f(t^m))^{-1/4} \right)^2\\
		 =&\left(\frac{\det(\mathrm{Im}~S_1(0)B)}{(2\pi)^{\dim\Sigma_f}}\right)^{1/2}|\beta(a,b)|^2=\alpha(a,b),
		 \end{align*}
		 where the first equality follows by Lemma \ref{lem_period-limit}. This proves the proposition.
\end{proof}
\section{Proof of Theorem \ref{mainthm}}\label{sec_proof}
We give the proof of Theorem \ref{mainthm} in this section. We first clarify some notations. Let $(A,\Theta)$ be a principally polarized complex abelian variety, where $\Theta\subseteq A$ denotes a divisor, such that $\mathcal{O}_A(\Theta)$ is an ample and symmetric line bundle satisfying $\dim H^0(A,\mathcal{O}_A(\Theta))=1$ and inducing the principal polarization. Then there exists a matrix $\tau\in\mathbb{H}_g$ such that $A\cong \mathbb{C}^g/(\mathbb{Z}^g+\tau\mathbb{Z}^g)$ and $\Theta$ is the zero-divisor of $\theta(\tau,\cdot)$. The canonical 1-1 form of $(A,\Theta)$ is given by
\begin{align}\label{equ_form}
\nu=\tfrac{i}{2}\sum_{j,k=1}^g ((\mathrm{Im}~\tau)^{-1})_{j k} dz_j\wedge d\overline{z}_k,
\end{align}
where $z_1,\dots,z_g$ are the coordinates in $\mathbb{C}^g$. Its $g$-th power is equal to
$$\nu^g=\frac{i^g g!}{2^g\det(\mathrm{Im}~\tau)}dz_1\wedge d\overline{z}_1\wedge\dots\wedge dz_g\wedge d\overline{z}_g.$$
We would like to substitute real coordinates of the form $z=a+\tau b$ with $a,b\in\mathbb{R}^g$.
By a direct computation we obtain
$$dz_1\wedge d\overline{z}_1\wedge\dots\wedge dz_g\wedge d\overline{z}_g=(2i)^g \det (\mathrm{Im}~\tau) da_1\wedge db_1\wedge\dots\wedge da_g\wedge db_g$$
and hence, $\nu^g=(-1)^g g!da_1\wedge db_1\wedge\dots\wedge da_g\wedge db_g$. Thus, we obtain $\nu^g(a+\tau b)=g!\lambda(a,b)$ for the Lebesgue measure $\lambda$ on $\mathbb{R}^{2g}$, as both volume forms are positive.

 To $(A,\Theta)$ we associate the invariant
\begin{align*}
I(A,\Theta)&=\log\int_A\|\theta\|^2(\tau,z)\frac{\nu^g(z)}{g!}-\int_A\log\|\theta\|^2(\tau,z)\frac{\nu^g(z)}{g!}\\
&=-\frac{g\log 2}{2}-\int_A\log\|\theta\|^2(\tau,z)\frac{\nu^g(z)}{g!}.
\end{align*}
We refer to \cite[Proposition 8.5.6]{BL04} for the computation of the first integral. Note, that we have $I(A,\Theta)=-2H(A,\Theta)-\frac{g\log2}{2}$ for the invariant $H(A,\Theta)$ defined in \cite[Section 2.1]{Wil17}.
As $[0,1)\times \tau[0,1)$ is a fundamental domain in $\mathbb{C}^g$ for $A$, we can express $I(A,\Theta)$ by
$$I(A,\Theta)=-2\int_{(a,b)\in[0,1)^{2g}}\log\|\theta\|(\tau,a+\tau b)\lambda(a,b)-\frac{g\log 2}{2}.$$
Now we give the proof of Theorem \ref{mainthm}.
\begin{proof}[Proof of Theorem \ref{mainthm}]
	Again, we use the notation from Section \ref{sec_families} associated to a smooth family $f\colon \mathscr{A}\to\Delta^*$ of principally polarized complex abelian varieties of dimension $g\ge 1$. 
	Let $\alpha$ be the non-zero real analytic function in Proposition \ref{proposition} and $W$ its zero set. By \cite{Mit15} $W$ has Lebesgue measure $0$.
	We compute by Proposition \ref{proposition} and Equations (\ref{equ_tropmoment}) and  (\ref{equ_valuethetatrop})
	\begin{align*}
	&\lim_{t\to 0}\left(I(\mathscr{A}_{t})+\tfrac{g\log 2}{2}+I(\Sigma_f)\log|t|+\frac{\dim \Sigma_f}{2}\log(-\log|t|)\right)\\
	&=\int_{(a,b)\in [0,1)^{2g}\setminus W}\lim_{t\to 0}\left(-\log\|\theta\|(T_f(t),a+T_f(t)b)^2+2\|\Psi\|(Bb_2)\log|t|\right.\\
	&\qquad\qquad\qquad\qquad\qquad\left.+\frac{\dim \Sigma_f}{2}\log(-\log|t|)\right)\lambda(a,b)\\
	&=-\int_{(a,b)\in [0,1]^{2g}}\log \alpha(a,b)\lambda(a,b).\\
	\end{align*}
	By Lemma \ref{Lem_analytic} below the function $\log\alpha$ is locally integrable. Thus, we can write
	$$\lim_{t\to 0}\left(I(\mathscr{A}_{t})+I(\Sigma_f)\log|t|+\frac{\dim\Sigma_f}{2}\log(-\log|t|)\right)=M$$
	for some $M\in \mathbb{R}$. This proves the theorem.
\end{proof}

\begin{Lem}\label{Lem_analytic}
	For any non-zero analytic function $f\colon \mathbb{R}^n\to \mathbb{R}$ the function $\log|f|$ is locally integrable.
\end{Lem}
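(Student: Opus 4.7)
The plan is to prove local integrability pointwise, using the Weierstrass preparation theorem to reduce to a one-variable estimate. Since every compact set is covered by finitely many small neighborhoods, it suffices to show that $\log|f|$ is integrable on some neighborhood of an arbitrary point $p\in\mathbb{R}^n$. Where $f(p)\ne 0$, the integrand is continuous, so I may assume $p=0$ and $f(0)=0$. Because $f$ is analytic and not identically zero on the connected space $\mathbb{R}^n$, it does not vanish on any open set, so some partial derivative of $f$ at $0$ must be non-zero; after a linear change of coordinates I may assume that $f(0,\dots,0,x_n)$ is not identically zero as a function of $x_n$ near $0$.

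The real version of the Weierstrass preparation theorem then provides a neighborhood of $0$ on which
$$f(x)=u(x)\,W(x),\qquad W(x)=x_n^k+a_{k-1}(x')\,x_n^{k-1}+\dots+a_0(x'),$$
with $u$ real analytic and non-vanishing, $x'=(x_1,\dots,x_{n-1})$, and the $a_j$ real analytic near $0\in\mathbb{R}^{n-1}$ with $a_j(0)=0$. The term $\log|u|$ is continuous and hence locally integrable, so the problem reduces to integrability of $\log|W|$ on a product $K'\times[-R,R]$ with $K'\subset\mathbb{R}^{n-1}$ compact and $R>0$ small.

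For each $x'\in K'$, factor $W(x',x_n)=\prod_{j=1}^k(x_n-\alpha_j(x'))$ over $\mathbb{C}$. By Cauchy's bound on roots of monic polynomials, the $\alpha_j(x')$ are uniformly bounded by some $M$ depending only on the continuous coefficient functions $a_j$ on $K'$. The key elementary estimate I would then establish is
$$\int_{-R}^{R}\bigl|\log|x_n-\alpha|\bigr|\,dx_n\le C(R,M)\quad\text{for every }\alpha\in\mathbb{C}\text{ with }|\alpha|\le M,$$
uniformly in $\alpha$: the positive part is bounded by $\log(R+M)$, and the negative part is controlled using $|x_n-\alpha|^2\ge(x_n-\mathrm{Re}\,\alpha)^2$, reducing matters to the standard $\int_{-1}^{1}-\log|s|\,ds=2$. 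Combining $\log|W|=\sum_j\log|x_n-\alpha_j(x')|$ with the triangle inequality and applying Fubini--Tonelli to the non-negative measurable function $|\log|W||$ gives
$$\int_{K'\times[-R,R]}\bigl|\log|W(x)|\bigr|\,dx\le k\,C(R,M)\,|K'|<\infty,$$
which finishes the argument.

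The main obstacle is placing $f$ in a normal form amenable to a one-variable analysis; this is handled by Weierstrass preparation after the preliminary change of coordinates. Once that algebraic reduction is in place, the rest is a routine uniform estimate on $\log|x_n-\alpha|$ combined with Fubini, with no measurability issues for the roots $\alpha_j(x')$ because the bound is applied pointwise in $x'$ to the manifestly measurable integrand $|\log|W||$.
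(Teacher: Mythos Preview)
Your argument is correct. Both proofs reduce to the integrability of $\log|t|$ on a bounded interval, but they reach that one-variable problem by different routes.

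The paper invokes Hironaka's resolution of singularities (in Atiyah's formulation): it pulls back along a proper analytic map $\varphi\colon\widetilde{U}_p\to U_p$ after which $f\circ\varphi$ becomes a monomial $\epsilon_q\prod y_j^{k_j}$ times a unit in local coordinates, so that $\log|f\circ\varphi|$ is a finite sum of terms $k_j\log|y_j|$ plus a bounded function; the Jacobian of $\varphi$ is bounded on a compact, and a covering argument finishes. Your approach instead uses Weierstrass preparation to write $f=u\cdot W$ with $W$ a monic polynomial in $x_n$, factors $W$ over $\mathbb{C}$, and controls $\int|\log|x_n-\alpha||\,dx_n$ uniformly in $\alpha$ via the inequality $|x_n-\alpha|\ge|x_n-\mathrm{Re}\,\alpha|$.

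Your route is decidedly more elementary: Weierstrass preparation is a local power-series statement, whereas resolution of singularities is a deep theorem. The paper's approach has the advantage that it immediately generalizes (e.g.\ to $|f|^{-s}$ for small $s>0$, which is Atiyah's original application) and avoids tracking roots, but for the lemma as stated your argument is shorter and self-contained. One minor point worth making explicit in a final write-up is the justification of the preliminary linear change of coordinates: the lowest-order homogeneous part of the Taylor series of $f$ at $0$ is a non-zero polynomial, hence non-vanishing along some line through the origin, which you can rotate to the $x_n$-axis.
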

\begin{proof}
	Let $p\in \mathbb{R}^n$. If $f(p)\neq 0$, then there exist an open neighborhood $p\in U_p$ of finite volume $V_p$ and a real number $M_p\in \mathbb{R}$, such that $\log|f(x)|>M_p$ for all $x\in U_p$. Therefore, we have $\int_{U_p}\log|f(x)|dx\ge M_p \cdot V_p$.
	
	Thus, we may assume $f(p)=0$.  By Hironaka's resolution theorem there exist an open neighborhood $p\in U_p\in\mathbb{R}^n$, a real analytic manifold $\widetilde{U}_p$ and a proper analytic map $\varphi\colon \widetilde{U}_p\to U_p$ such that 
	$$\varphi\colon \widetilde{U}_p\setminus\{f\circ \varphi=0\}\to U_p\setminus\{f=0\}$$
	is an isomorphism and for each $q\in \widetilde{U}_p$ there exist local analytic coordinates $(y_1,\dots,y_n)$ centered at $q$, such that $f\circ \varphi=\epsilon_q \prod_{j=1}^n y_j^{k_j(q)}$ for some $k_j(q)\in \mathbb{Z}_{\ge 0}$ and an invertible analytic function $\epsilon_q$ in an open neighborhood $q\in \widetilde{U}_{p,q}\subseteq \widetilde{U}_p$. We refer to \cite{Ati70} for this version of Hironaka's resolution theorem. By shrinking $\widetilde{U}_{p,q}$ and scaling the coordinates, we may assume that $\widetilde{U}_{p,q}=\{x\in \widetilde{U}_p~|~ -1<y_i(x)<1\}$ lies in a compact subset of $\widetilde{U}_p$ and that there is a negative constant $C_q\in \mathbb{R}_{<0}$ such that $\log |\epsilon_q(x)|\ge C_q$ for all $x\in \widetilde{U}_{p,q}$. Further, we may restrict $U_p$ such that $|f(x)|<1$ for all $x\in U_p$.
	
	We can choose a compact neighborhood $K$ of $p$ such that $K\subseteq U_p$. As $\varphi$ is proper, the pre-image $\varphi^{-1}(K)\subseteq \widetilde{U}_p$ is also compact. Hence, there is a finite set of points $\{q_1,\dots,q_l\}\subseteq \widetilde{U}_p$, such that
	$$\varphi^{-1}(K)\subseteq\bigcup_{i=1}^l \widetilde{U}_{p,q_i}.$$
	Let $\left(\rho_i\colon \widetilde{U}_{p,q_i}\to [0,1]\right)_{1\le i\le l}$ be an associated partition of unity. Then we may compute the integral of $\log|f|$ over $K$ by a pullback along $\varphi$
	\begin{align*}
	&\int_K \log|f(x)| dx_1\dots dx_n\\
	&=\sum_{i=1}^l \int_{\widetilde{K}_i}\left(\log|\epsilon_{q_i}(y)|+\sum_{j=1}^n k_j(q_i)\log|y_j|\right) \rho_i(y) \det \left(\frac{\partial \varphi_k(y)}{\partial y_j}\right)dy_1\dots dy_n,
	\end{align*}
	where $\widetilde{K}_i=\widetilde{U}_{p,q_i}\cap \varphi^{-1}(K)$.
	As $\varphi$ is analytic and $\widetilde{U}_{p,q_i}$ lies in a compact subset of $\widetilde{U}_p$, there is a real number $C_1$, such that
	$$\left|\det \left(\frac{\partial \varphi_j(y)}{\partial y_k}\right)\right|\le C_1.$$
	We set $C=2^n\sum_{i=1}^{l}C_{q_i} C_1$ and $k=\sum_{i=1}^l\sum_{j=1}^n k_j(q_i)$. As we always have $|y_j|<1$ on $\widetilde{U}_{p,q_i}$, we can bound
	$$\int_K \log|f(x)| dx_1\dots dx_n\ge C+2^{n-1}kC_1\int_{-1}^{1}\log|t|dt=C-2^n kC_1>-\infty.$$
	Since $K$ was chosen to be a compact neighborhood of $p$, this means that $\log|f|$ is locally integrable.
\end{proof}

\section{Metrized graphs and tropical Jacobians}\label{sec_graphs}
Before we apply our main theorem to obtain arithmetic applications, we discuss the notions of metrized graphs and their associated tropical Jacobians in this section. In particular, we discuss potential theory on metrized graphs as introduced by Zhang \cite{Zha93} and Chinburg--Rumely \cite{CR93}, see also \cite{BR07}.

By a \emph{metrized graph} we mean a compact and connected metric space $\Gamma$, such that for all $p\in\Gamma$ there exists an open neighborhood $p\in U_p\subseteq \Gamma$, which is isometric to the star-shaped set
$$S(n_p,r_p)=\{z\in \mathbb{C}~|~z=t e^{2\pi i k/n_p}\text{ for some } 0\le t\le r_p\text{ and } k\in\mathbb{Z}\},$$
for some $n_p\in\mathbb{Z}_{> 0}$ and $r_p\in \mathbb{R}_{> 0}$ or to a point, in which case we set $n_p=0$. We call $n_p$ the valency of $p$. There are only finitely many points $p\in\Gamma$ of valency $n_p\neq 2$ and we denote this set of points by $V_0$. In general, a finite set $V\subseteq \Gamma$ is called a \emph{vertex set} if $V_0\subseteq V$ and the closure of any connected component of $\Gamma\setminus V$ intersects $V$ in exactly two points. For the following we fix a vertex set $V$.
The space $\Gamma\setminus V$ consists of finitely many open line segments $e_1,\dots, e_r\subseteq \Gamma$. We write $\ell(e)$ for the length of any line segment $e$ and we define 
$$\delta(\Gamma)=\sum_{i=1}^r \ell(e_i).$$

Next, we would like to consider potential theory on a metrized graph $\Gamma$. A path in $\Gamma$ is a length-preserving continuous map $\gamma\colon [0,l]\to \Gamma$ for some $l>0$. The tangent space of $\Gamma$ at a point $p\in \Gamma$ consists of the equivalence classes
$$T_p(\Gamma)=\{\gamma \text{ path in } \Gamma \text{ with }\gamma(0)=p\}/\sim,$$
where $\gamma\sim\gamma'$ holds if there exists some $\epsilon>0$ such that $\gamma(t)=\gamma'(t)$ for all $0\le t<\epsilon$. Note that $\#T_p(\Gamma)=n_p$. For any tangent vector $v\in T_p(\Gamma)$ represented by a path $\gamma$ we define the one-sided directional derivative at $p$ as
$$d_v f(p)=\lim_{t\to 0^+}\frac{f(\gamma(t))-f(p)}{t}$$
if the limit exists. For the following class of functions
\begin{align*}
\mathrm{Zh}(\Gamma)=\{f\colon \Gamma\to\mathbb{R}~|~&f \text{ continuous, piecewise } C^2 \text{ and} \\
&d_v f(p) \text{ exists for all } p\in\Gamma, v\in T_p(\Gamma)\}
\end{align*}
Zhang \cite[Appendix]{Zha93} introduced the Laplacian operator
$$\Delta(f)=f''dx+\sum_{p\in\Gamma}\left(\sum_{v\in T_p(\Gamma)}d_v f(p)\right)\delta_p,$$
which is considered as a measure on $\Gamma$, where $\delta_p$ denotes the Dirac measure at $p$.

Let $\mu$ be a measure on $\Gamma$ of volume $1$. For every point $x\in \Gamma$ we obtain a unique function $g_\mu(x,\cdot)\in \mathrm{Zh}(\Gamma)$ satisfying
$$\Delta_y g_\mu(x,y)=\delta_y-\mu \quad\text{and}\quad \int_{\Gamma}g_\mu(x,y)\mu(y)=0.$$
This gives a unique continuous and symmetric function $g_\mu\colon \Gamma\times\Gamma\to \mathbb{R}$ called the \emph{Green function} associated to $\mu$. For $\mu=\delta_p$ we particularly write $g_p(x,y)=g_{\delta_p}(x,y)$ and we call $r(\Gamma;p,q)=g_p(q,q)$ the \emph{resistance function}, which is also symmetric in $p$ and $q$. 
Let $e_i$ be any open line segment as above. We write $p_i,p_i'$ for the points in the intersection $\overline{e}_i\cap V$. Then we define
$$r(e_i)=\begin{cases} r(\Gamma\setminus e_i; p_i,p_i') & \text{if } \Gamma\setminus e_i \text{ is connected,}\\
\infty & \text{else.}\end{cases}$$

A divisor on $\Gamma$ is a finite formal sum $D=\sum_{p\in\Gamma}m_p p$. The canonical divisor on $\Gamma$ is given by $K_{\mathrm{can}}=\sum_{p\in\Gamma}(n_p-2)p$. Let $K=\sum_{p\in\Gamma}(n_p+m_p-2)p$ be an effective divisor on $\Gamma$ for some integers $m_p\ge 0$. The pair $(\Gamma,K)$ is also called a \emph{polarized metrized graph} of genus $g=\frac{1}{2}\deg K+1$. To distinguish it from the genus of the underlying graph, we write $g_0(\Gamma)=g_0(\Gamma,K)$ for the genus of the graph $\Gamma$ forgetting the polarization. If $g\neq 0$, the canonical Zhang measure $\mu$ of $(\Gamma,K)$ is given by
$$\mu=\frac{1}{2g}\left(\delta_K-\delta_{K_{\mathrm{can}}}+\sum_{i=1}^r\frac{2}{r(e_i)+\ell(e_i)}dx|_{e_i}\right).$$

Now we can give the definitions of the invariants associated to the polarized metrized graph $(\Gamma, K)$, which we will need in the subsequent sections. The $\epsilon$-invariant introduced by Zhang in \cite[Theorem 4.4]{Zha93} is defined by
$$\epsilon(\Gamma,K)=\int_{\Gamma} g_{\mu}(x,x)((2g-2)\mu+\delta_{K}).$$
This invariant is always non-negative.
The $\varphi$-invariant was introduced by Zhang in \cite[Theorem 1.3.1]{Zha10} and it has the form
$$\varphi(\Gamma, K)=-\tfrac{1}{4}\delta(\Gamma)+\tfrac{1}{4}\int_\Gamma g_\mu(x,x)((10g+2)\mu-\delta_K).$$

There are several relations between these invariants. First, we would like to recall the inequality 
$$\delta(\Gamma)\le \tfrac{2g(7g+5)}{(g-1)^2}\varphi (\Gamma, K)$$
for $g\ge 2$ due to Cinkir \cite[Theorem 2.11]{Cin11}. To give a second inequality, let us recall the invariant $\tau(\Gamma)$ introduced by Baker--Rumely \cite[Theorem 14.1]{BR07} based on a result by Chinburg--Rumely \cite[Theorem 2.11]{CR93}, which is non-negative by \cite[Lemma 14.4]{BR07}. Due to de Jong \cite[Proposition 9.2]{dJo18} it can be expressed as 
$$0\le \tau(\Gamma)=\tfrac{1}{12}\left(\delta(\Gamma)+4\varphi(\Gamma,K)-2\epsilon(\Gamma,K)\right).$$
If we combine both inequalities, we obtain for $g\ge 2$
\begin{align}\label{equ_bound-of-varphi}
0\le\delta(\Gamma)+\epsilon(\Gamma,K)\le\tfrac{3}{2}\delta(\Gamma)+2\varphi(\Gamma,K)\le\tfrac{23g^2+11g+2}{(g-1)^2}\varphi(\Gamma, K).
\end{align}

Next we would like to define the tropical Jacobian associated to $\Gamma$.
Forgetting the metric we may consider $\Gamma$ as a usual directed graph $\Gamma$ with vertex set $V(\Gamma)=V$ and edge set $E(\Gamma)=\{e_1,\dots,e_r\}$, where we fix an orientation on the edges, such that $p_i$ is the source and $p_i'$ the target of $e_i$. The group of $0$-dimensional chains is given by $C_0(\Gamma,\mathbb{Z})=\bigoplus_{p\in V}\mathbb{Z}p$ and the group of $1$-dimensional chains by $C_1(\Gamma,\mathbb{Z})=\bigoplus_{i=1}^r\mathbb{Z}e_i$. The boundary map is defined by
$$\delta\colon C_1(\Gamma)\to C_0(\Gamma),\quad \sum_{i=1}^r m_i e_i\mapsto \sum_{i=1}^r m_i (p'_i-p_i)$$
and we set $H_1(\Gamma,\mathbb{Z})=\ker \delta$. The metric of $\Gamma$ induces an inner product $[\cdot,\cdot]_\Gamma$ on $C_1(\Gamma,\mathbb{R})=C_1(\Gamma,\mathbb{Z})\otimes_{\mathbb{Z}}\mathbb{R}$ by $[e_i,e_j]_\Gamma=\delta_{ij} \ell(e_i)$, where $\delta_{ij}$ denotes the Kronecker symbol. This inner product restricts to an inner product $[\cdot,\cdot]_\Gamma$ on $H_1(\Gamma,\mathbb{R})=H_1(\Gamma,\mathbb{Z})\otimes_\mathbb{Z}\mathbb{R}$. Hence, $H_1(\Gamma,\mathbb{Z})$ together with $[\cdot,\cdot]_\Gamma$ forms an Euclidean lattice and we define the \emph{tropical Jacobian} of $\Gamma$ to be the associated polarized real torus $\mathrm{Jac}(\Gamma)=H_1(\Gamma,\mathbb{R})/H_1(\Gamma,\mathbb{Z})$. Note, that $\mathrm{Jac}(\Gamma)$ does not depend on the choice of the vertex set $V$ and the orientation of the edges.
Due to de Jong and Shokrieh \cite[Equation (1.4)]{dJS18b} the tropical moment of $\mathrm{Jac}(\Gamma)$ can be related to the above invariants of $\Gamma$ by the formula
\begin{align}\label{equ_delta+epsilon}
\delta(\Gamma)+\epsilon(\Gamma, K)=12 I(\mathrm{Jac}(\Gamma))+2\varphi(\Gamma, K).
\end{align}
By construction we have $g_0(\Gamma)=\dim \mathrm{Jac}(\Gamma)$.
\section{Families of Riemann surfaces}\label{sec_families_riemann}
We would like to connect Theorem \ref{mainthm} to de Jong's result of the degeneration behavior of Faltings' $\delta$-invariant on families of Riemann surfaces in \cite{dJo19}. In particular, we recall the metrized graph associated to such a family and we compare the polarized real torus associated to the corresponding family of their Jacobian varieties in Section \ref{sec_families} with the tropical Jacobian associated to this metrized graph. As an application we will prove Corollaries \ref{cor_ZK} and \ref{cor_lowerbound}.

First, we give definitions for the Zhang--Kawazumi invariant $\varphi$ and for Faltings' $\delta$-invariant. Let $M$ be any compact and connected Riemann surface of genus $g\ge 2$. Its Jacobian $\mathrm{Jac}(M)$ is a principally polarized complex abelian variety. Let $\nu$ be its canonical 1-1 form defined in (\ref{equ_form}). We define the 1-1 form $\mu=\frac{1}{g}I^*\nu$ on $M$, where $I\colon M\to \mathrm{Jac}(M)$ denotes the Abel--Jacobi embedding associated to some base point. The Arakelov--Green function $G\colon M^2\to \mathbb{R}_{\ge 0}$ is characterized by
$$\partial_x \overline{\partial}_x \log G(x,y)=\pi i (\mu(x)-\delta_y(x)),\quad \int_M \log G(x,y)\mu(x)=0.$$
Let $\Delta\subseteq M^2$ denote the diagonal and $h_\Delta$ the curvature form of the line bundle $\mathcal{O}_{M^2}(\Delta)$. The Zhang--Kawazumi invariant
$$\varphi(M)=\int_{M^2} \log G(x,y) h_\Delta^2(x,y)$$
has been introduced independently by Zhang \cite[Theorem 1.3.1]{Zha10} and Kawazumi \cite{Kaw08}. We refer to the proof of \cite[Proposition 2.5.3]{Zha10} for this expression. From the same proposition one deduces that one always has $\varphi(M)>0$ as $g\ge 2$.
Faltings' $\delta$-invariant has been introduced by Faltings in \cite[p. 402]{Fal84} and as shown in \cite[Theorem 1.1]{Wil17} it can be expressed by
\begin{align}\label{equ_delta}
\delta(M)=12 I(\mathrm{Jac}(M))+2\varphi(M)-2g\log 2\pi^4.
\end{align}
Note, that Equation (\ref{equ_delta+epsilon}) could be seen as a tropical version of this equation.

Now let $\pi\colon \mathcal{X}\to \Delta$ be a semistable curve of genus $g\ge 2$ over the complex unit disc $\Delta$ and set $\mathcal{X}^*=\pi^{-1}(\Delta^*)$ as well as $\mathcal{X}_0=\pi^{-1}(0)$. We assume that $\mathcal{X}^*$ is smooth over $\Delta^*=\Delta\setminus\{0\}$.
We associate to $\mathcal{X}_0$ its dual graph $\Gamma_\pi$, whose vertex set $V(\Gamma_\pi)$ consists of the irreducible components of $\mathcal{X}_0$ and whose edge set $E(\Gamma_\pi)$ consists of the singular points in $\mathcal{X}_0$, such that any edge connects the two not necessarily different vertices associated to the irreducible components meeting in the corresponding singular point.

We obtain a metrization of $\Gamma_\pi$ by the family $\pi$. To describe this, it is enough to assign a length $\ell(e)$ to every edge $e$ of $\Gamma_\pi$. Let $p_e\in\mathcal{X}_0$ be the singular point corresponding to an edge $e$ of $\Gamma_\pi$. Then there is an open neighborhood $p_e\subseteq \mathcal{U}_e\subseteq\mathcal{X}$ such that $\mathcal{U}_e$ is isomorphic to an open neighborhood of $0$ in the subspace $\mathcal{Z}_n=\{(x,y,t)\in\mathbb{C}^3~|~xy=t^n\}$ of $\mathbb{C}^3$ for some $n\in\mathbb{Z}_{\ge 1}$, see for example \cite[Proposition X.2.1]{ACG11}. We set $\ell(e)=n$. We also obtain a polarization on $\Gamma_\pi$ by the effective divisor
$$K=\sum_{v\in V(\Gamma_\pi)} (n_v+g_v-2)v,$$
where $n_v$ denotes the valency of the vertex $v$ as in Section \ref{sec_graphs} and $g_v$ denotes the genus of the normalization of the irreducible component of $\mathcal{X}_0$ corresponding to $v\in V(\Gamma_\pi)$.
The polarized metrized graph $\Gamma_\pi=(\Gamma_\pi,K)$ has genus $g$.

Now the main result of \cite{dJo19} states, that if $\Omega(t)$ is a family of normalized period matrices on $\Delta^*$ determined by a symplectic framing of $R^1\pi_*\mathbb{Z}_{\pi^{-1}(\Delta^*)}$, then we have
\begin{align*}
\delta(\mathcal{X}_t)\sim -(\delta(\Gamma_\pi)+\epsilon(\Gamma_\pi))\log|t| -6\log\det\mathrm{Im}~\Omega(t).
\end{align*}
According to \cite[Proposition 11.2]{dJo19} one can choose the family $\Omega(t)$ to have the form as in (\ref{equ_period-matrix}). Hence, we may apply Lemma \ref{lem_period-limit} to obtain
\begin{align}\label{equ_delta-dejong}
\delta(\mathcal{X}_t)\sim -(\delta(\Gamma_\pi)+\epsilon(\Gamma_\pi))\log|t| -6\dim\Sigma_f\log(-\log|t|),
\end{align}
where $f\colon \mathrm{Jac}(\mathcal{X}^*)\to \Delta^*$ denotes the family of principally polarized complex abelian varieties obtained by taking the Jacobian of $\mathcal{X}^*$.
We would like to compare de Jong's formula with Theorem \ref{mainthm}. To apply the results in Section \ref{sec_graphs}, especially Equation (\ref{equ_delta+epsilon}), we need to show, that the associated polarized real torus $\Sigma_f$ coincides with tropical Jacobian $\mathrm{Jac}(\Gamma_\pi)$.
\begin{Lem}\label{lem_period}
It holds $\Sigma_f\cong \mathrm{Jac}(\Gamma_{\pi})$. In particular, we have $g_0(\Gamma_\pi)=\dim \Sigma_f$.
\end{Lem}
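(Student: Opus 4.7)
The plan is to identify the Euclidean lattice $(B\mathbb{Z}^{g_2},[\cdot,\cdot]_B)$ with $(H_1(\Gamma_\pi,\mathbb{Z}),[\cdot,\cdot]_{\Gamma_\pi})$ by analyzing the local monodromy of $f$ around $0\in\Delta$. Let $T\colon H_1(\mathcal{X}_t,\mathbb{Z})\to H_1(\mathcal{X}_t,\mathbb{Z})$ denote the monodromy operator. Semistability of $\pi$ forces $T$ to be unipotent, so $N=\log T$ is a nilpotent endomorphism with $N^2=0$, and by the construction of (\ref{equ_period-matrix}) in \cite{Nam76} the matrix $B$ is precisely the Gram matrix of $N$ in a symplectic basis of $H_1(\mathcal{X}_t,\mathbb{Z})$ adapted to the monodromy weight filtration: the last $g_2$ ``$A$-cycles'' span $\operatorname{im}(N)$, the last $g_2$ ``$B$-cycles'' are lifts whose images under $N$ recover them, and the lower-right block $\frac{m\log t}{2\pi i}B$ encodes the resulting pairing.

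The central step is a Picard--Lefschetz computation. Using the local analytic model $xy=t^{\ell(e)}$ at each node $p_e\in\mathcal{X}_0$, one obtains
$$N(\delta)=\sum_{e\in E(\Gamma_\pi)}\ell(e)\,\langle\delta,\alpha_e\rangle\,\alpha_e\qquad(\delta\in H_1(\mathcal{X}_t,\mathbb{Z})),$$
where $\alpha_e$ is the vanishing cycle at $p_e$ and $\langle\cdot,\cdot\rangle$ is the intersection pairing on $H_1(\mathcal{X}_t,\mathbb{Z})$. The vanishing cycles span $\operatorname{im}(N)$, and the relations they satisfy come exactly from the vertices of $\Gamma_\pi$: on each component of $\mathcal{X}_0$, the sum of small loops around its nodes is null-homologous. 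A comparison of ranks and relations then gives a canonical isomorphism $\operatorname{im}(N)\cong H_1(\Gamma_\pi,\mathbb{Z})$ sending $\sum_ec_e\alpha_e$ to the $1$-cycle $\sum_ec_ee$.

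It remains to transport the pairings. The symplectic form induces a well-defined symmetric pairing on $\operatorname{im}(N)$ by $(N\delta_1,N\delta_2)\mapsto\langle\delta_1,N\delta_2\rangle$, and substituting the displayed formula shows that on $\gamma=\sum c_ee$ and $\gamma'=\sum c'_ee$ it takes the value $\sum_e\ell(e)c_ec'_e=[\gamma,\gamma']_{\Gamma_\pi}$. On the other hand, by the previous paragraph the same pairing on $\operatorname{im}(N)$ is encoded by the matrix $B$. Hence $(B\mathbb{Z}^{g_2},[\cdot,\cdot]_B)\cong(H_1(\Gamma_\pi,\mathbb{Z}),[\cdot,\cdot]_{\Gamma_\pi})$ as Euclidean lattices, which yields $\Sigma_f\cong\mathrm{Jac}(\Gamma_\pi)$. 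The equality $g_0(\Gamma_\pi)=\dim\Sigma_f$ is then immediate since both sides equal the rank of $H_1(\Gamma_\pi,\mathbb{Z})$.

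The main obstacle will be the normalization bookkeeping throughout: one has to keep careful track of the factor $2\pi i$ in the exponential period map so that the coefficient of $\log t$ in (\ref{equ_period-matrix}) matches $N/(2\pi i)$ rather than $N$ itself, verify that the Picard--Lefschetz coefficient at a node with local equation $xy=t^n$ really is $n$ in agreement with the edge length convention of Section~\ref{sec_graphs}, and make the identification $\operatorname{im}(N)\cong H_1(\Gamma_\pi,\mathbb{Z})$ integral rather than merely rational.
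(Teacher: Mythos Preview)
Your strategy is the same as the paper's at its core: both compute the local monodromy via Picard--Lefschetz at each node and read off the matrix $B$ as the Gram matrix of the resulting pairing. The paper carries this out by hand rather than invoking $N=\log T$ abstractly: it fixes a spanning tree of $\Gamma_\pi$, takes the $r=g_0(\Gamma_\pi)$ non-tree edges $e_1,\dots,e_r$, chooses an adapted basis $B_1,\dots,B_r$ of $H_1(\Gamma_\pi,\mathbb{Z})$ with $B_i=\sum_e b_{i,e}e$, builds an explicit symplectic basis $A_{1,t},\dots,A_{g,t},B_{1,t},\dots,B_{g,t}$ of $H_1(\mathcal{X}_t,\mathbb{Z})$ with the first $r$ $A$-cycles equal to the vanishing cycles $A_{e_i,t}$, and then tracks a generator of $\pi_1(\Delta^*)$ acting on $B_{j,t}$. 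This yields $B_{ij}=\sum_e\ell(e)b_{i,e}b_{j,e}$, which is visibly $[B_i,B_j]_{\Gamma_\pi}$. Your formulation via $N$ and the weight filtration is more conceptual; the paper's explicit basis avoids any appeal to general Hodge theory and makes the integrality transparent.

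There is, however, a genuine slip in your identification $\operatorname{im}(N)\cong H_1(\Gamma_\pi,\mathbb{Z})$. The map $\sum_e c_e\alpha_e\mapsto\sum_e c_e e$ is not well-defined: the vanishing cycles $\alpha_e$ are not linearly independent in $H_1(\mathcal{X}_t,\mathbb{Z})$ (they satisfy precisely the vertex relations you mention), so an element of their span has many representations, and these map to different elements of $C_1(\Gamma_\pi)$; moreover the target need not even be a cycle. What your pairing computation actually uses, and what is correct, is the composite of the canonical isomorphism $N\colon H_1(\mathcal{X}_t,\mathbb{Z})/\ker(N)\xrightarrow{\sim}\operatorname{im}(N)$ with the specialization isomorphism $H_1(\mathcal{X}_t,\mathbb{Z})/\ker(N)\cong H_1(\Gamma_\pi,\mathbb{Z})$: if $\delta$ specializes to $\gamma=\sum_e c_e e\in H_1(\Gamma_\pi,\mathbb{Z})$ then $\langle\delta,\alpha_e\rangle=c_e$, so $N\delta=\sum_e\ell(e)c_e\alpha_e$ and your formula $\langle\delta_1,N\delta_2\rangle=\sum_e\ell(e)c_ec'_e=[\gamma_1,\gamma_2]_{\Gamma_\pi}$ goes through. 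Once the map is described this way, the integrality you flag in your last paragraph is automatic and the argument is complete.
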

\begin{proof}
We fix an orientation on the edges of $\Gamma_\pi$.
Let $r=g_0(\Gamma_\pi)$ denote the genus of the graph $\Gamma_\pi$. We may choose edges $e_1,\dots, e_r$ of $\Gamma_\pi$, such that $\Gamma_\pi\setminus \{e_1,\dots,e_r\}$ is a connected tree. Further, we may choose a basis $B_1,\dots,B_r$ of $H_1(\Gamma,\mathbb{Z})$ with $B_i=\sum_{e\in E(\Gamma_\pi)}b_{i,e}e$ for some $b_{i,e}\in\{-1,0,1\}$ and $b_{i,e_j}=\delta_{ij}$, where $\delta_{ij}$ denotes the Kronecker symbol. We have a short exact sequence
$$0\to H_1(\widetilde{\mathcal{X}}_0,\mathbb{Z})\to H_1(\mathcal{X}_0,\mathbb{Z})\to H_1(\Gamma_\pi,\mathbb{Z})\to 0,$$
where $\widetilde{\mathcal{X}}_0$ denotes the normalization of $\mathcal{X}_0$. Moreover, any embedding of $\Gamma_\pi$ into $\mathcal{X}_0$ defines an isomorphism
$$H_1(\mathcal{X}_0,\mathbb{Z})\cong H_1(\widetilde{\mathcal{X}}_0,\mathbb{Z})\oplus H_1(\Gamma_\pi,\mathbb{Z}).$$
Hence, we may construct a basis of $H_1(\mathcal{X}_0,\mathbb{Z})$ by the union of the basis $B_1,\dots,B_r$ and a symplectic basis $A_{r+1},\dots,A_g,B_{r+1},\dots, B_g$ of $H_1(\widetilde{\mathcal{X}}_0,\mathbb{Z})$.

We write $p_e$ for the singular point in $\mathcal{X}_0$ corresponding to an edge $e\in E(\Gamma_\pi)$. Recall that $\mathcal{X}$ was locally around $p_e$ given by the equation $xy=t^{\ell(e)}$ as a subspace of $\mathbb{C}^3$. Hence, there is an open neighborhood $0\in U\subseteq\Delta$, such that 
$$A_{e,t}=\{(x,y,z)\in  \mathbb{C}^2\times \{t\}~|~|x|=|t|^{\ell(e)/2},xy=t^{\ell(e)}\}$$
is well-defined and contained in $\mathcal{X}_t$ for any $t\in U$ and any $e\in E(\Gamma_\pi)$. In particular, we denote $A_{i,t}=A_{e_i,t}$ for $1\le i\le r$. For any domain $D\subseteq U\setminus \{0\}$ we may deform the cycles $A_{r+1},\dots,A_g,B_{1},\dots, B_g$ of $H_1(\mathcal{X}_0,\mathbb{Z})$ and choose an orientation on $A_{i,t}$ to obtain a symplectic basis 
$$A_{1,t},\dots,A_{g,t},B_{1,t},\dots,B_{g,t}\in H_1(\mathcal{X}_t,\mathbb{Z})$$
of $\mathcal{X}_t$, which continuously depends on $t\in D$. 
Furthermore, we can choose the deformations of $A_{r+1,t},\dots,A_{g,t},B_{r+1,t},\dots, B_{g,t}$, such that they give a continuous family of cycles on $U$ and do not depend on the choice of $D$. In general, we orient $A_{e,t}$ such that the intersection number $[A_{e,t},B_{j,t}]$ equals $b_{e,j}$.

To define a period matrix on $\mathcal{X}_t$ we choose a basis $\omega_{1,t},\dots,\omega_{g,t}\in H^0(\mathcal{X}_t,\Omega_{\mathcal{X}}^1)$ of $1$-forms for every $t\in U\setminus \{0\}$, such that $\int_{A_{j,t}}\omega_{i,t}=\delta_{ij}$. For any domain $D\subseteq U\setminus\{0\}$ we then define the period matrix by $\Omega(t)_{ij}=\int_{B_{j,t}}\omega_{i,t}$ for $t\in D$. This definition induces a multi-valued holomorphic map $\Omega\colon \Delta^*\to \mathbb{H}_g$. We would like to study its monodromy. Let $\gamma$ be a positive generator of $\pi_1(\Delta^*)$. As $\gamma$ only acts non-trivially on the cycles $B_{1,t},\dots, B_{r,t}$ it only changes the matrix $\Omega(t)$ in the upper left $r\times r$ square. The action of $\gamma$ can change the cycles $B_{1,t},\dots,B_{r,t}$ only by adding an integral linear combination of the cycles $A_{e,t}$ since $B_{i,t}$ and $\gamma(B_{i,t})$ have to deform to the same cycle on $\mathcal{X}_0$. As $A_{e,t}$ is locally given by $xy=t^{\ell(e)}$ and $|x|=|t|^{\ell(e)/2}$ for $e\in E(\Gamma_\pi)$, the action of $\gamma$ on $B_{j,t}$ is given by
$$B_{j,t}\xrightarrow{\gamma} B_{j,t}+\sum_{e\in E(\Gamma_\pi)}\ell(e) b_{j,e} A_{e,t}.$$
If we express $A_{e,t}$ in our symplectic basis of $H_1(\mathcal{X}_t,\mathbb{Z})$ only the cycles $A_{1,t},\dots,A_{r,t}$ can occur with non-trivial coefficient, since $A_{e,t}$ contracts to a point on $H_1(\mathcal{X}_0,\mathbb{Z})$. Further, the coefficient of $A_{j,t}$ in $A_{e,t}$ can be computed by the intersection number $[A_{e,t},B_j]$. By construction we have $[A_{e,t},B_j]=b_{e,j}$. Hence, we obtain
$$\sum_{e\in E(\Gamma_\pi)}\ell(e) b_{j,e} A_{e,t}\equiv\sum_{e\in E(\Gamma_\pi)}\ell(e) b_{j,e} b_{i,e} A_{i,t} \mod \mathrm{span}(A_{k,t}~|~k\neq i)$$
Thus, if we integrate with respect to the form $\omega_{i,t}$ we obtain that $\gamma$ acts on the entries of $\Omega(t)$ by
$$\Omega(t)_{ij}=\int_{B_{j,t}}\omega_{i,t}\xrightarrow{\gamma}\int_{B_{j,t}}\omega_{i,t}+\sum_{e\in E(\Gamma_\pi)}\ell(e) b_{j,e}b_{i,e}.$$
It follows, that $\Omega$ is of the form in (\ref{equ_period-matrix}) with $r=g_2$, $m=1$ and the entries of the matrix $B$ are given by $B_{ij}=\sum_{e\in E(\Gamma_\pi)}\ell(e)b_{j,e}b_{i,e}$. Note, that we placed the $g_2\times g_2$ matrix in the bottom right in (\ref{equ_period-matrix}) now in the top left for a better notation.

Finally, we would like to show that $\Sigma_f\cong\mathrm{\Gamma_\pi}$. It is enough to show that the map
$$\alpha\colon H_1(\Gamma_\pi,\mathbb{Z})\to B\mathbb{Z}^{g_2},\quad \sum_{i=1}^{g_2} c_i B_i\mapsto B\begin{pmatrix}c_1\\ \vdots\\ c_{g_2}\end{pmatrix}$$
is an isomorphism of Euclidean lattices. As it sends a basis onto a basis, it is an isomorphism of the underlying free $\mathbb{Z}$-modules. Further for $1\le i,j\le g_2$ it holds
\begin{align*}[B_i, B_j]_{\Gamma_\pi}=&\left[\sum_{e\in E(\Gamma_\pi)}b_{i,e}e,\sum_{e\in E(\Gamma_\pi)}b_{j,e}e\right]=\sum_{e\in E(\Gamma_\pi)}b_{i,e}b_{j,e}\ell(e)=B_{ij}\\
=&\ltrans{\alpha(B_i)}B^{-1}\alpha(B_j)=[\alpha(B_i),\alpha(B_j)]_B.
\end{align*}
Hence, $\alpha$ is an isomorphism of Euclidean lattices and the lemma is proved.
\end{proof}
Using this lemma, we can now prove Corollaries \ref{cor_ZK} and \ref{cor_lowerbound}.
\begin{proof}[Proof of Corollary \ref{cor_ZK}]
If we apply Lemma \ref{lem_period} to Theorem \ref{mainthm} for the family of Jacobians $f\colon \mathrm{Jac}(\mathcal{X}^*)\to \Delta^*$, we obtain
$$I(\mathrm{Jac}(\mathcal{X}_t))\sim -I(\mathrm{Jac}(\Gamma_{\pi}))\log|t|-\frac{\dim \Sigma_f}{2}\log(-\log|t|),$$
which can be combined with Equation (\ref{equ_delta-dejong}) to
$$\tfrac{1}{2}\delta(\mathcal{X}_t)-6I(\mathrm{Jac}(\mathcal{X}_t))\sim -\left(\tfrac{1}{2}\delta(\Gamma_\pi)+\tfrac{1}{2}\epsilon(\Gamma_\pi)-6\mathrm{Jac}(\Gamma_{\pi})\right)\log|t|.$$
By Equation (\ref{equ_delta}) we have $\varphi(\mathcal{X}_t)\sim \frac{1}{2}\delta(\mathcal{X}_t)-6I(\mathrm{Jac}(\mathcal{X}_t))$ and by Equation (\ref{equ_delta+epsilon}) it holds $\varphi(\Gamma_{\pi})=\tfrac{1}{2}\delta(\Gamma_\pi)+\tfrac{1}{2}\epsilon(\Gamma_\pi)-6\mathrm{Jac}(\Gamma_{\pi})$. Putting this into the above equation yields as desired
$\varphi(\mathcal{X}_t)\sim -\varphi(\Gamma_\pi)\log|t|$.
\end{proof}
\begin{proof}[Proof of Corollary \ref{cor_lowerbound}]
	Let $\mathcal{M}_g$ be the moduli space of compact and connected Riemann surfaces of genus $g\ge 2$ and $\overline{\mathcal{M}}_g$ its Deligne--Mumford compactification. We may consider $\varphi$ and $\delta$ as continuous functions on $\mathcal{M}_g$. We first would like to prove, that there is a positive constant $c_2'(g)$ such that $\varphi(M)\ge c_2'(g)$ for all $M\in\mathcal{M}_g$. As $\varphi(M)>0$ for every $M\in\mathcal{M}_g$ and  $\overline{\mathcal{M}}_g$ is compact, it is enough to show, that for every $p\in\overline{\mathcal{M}}_g\setminus\mathcal{M}_g$ and every embedding $\kappa\colon\Delta\to\overline{\mathcal{M}}_g$ with $\kappa(0)=p$ and $\kappa(\Delta^*)\subseteq\mathcal{M}_g$ we have $\lim_{t\to 0}\varphi(\kappa(t))\neq 0$. Let $\pi\colon \mathcal{X}\to \Delta$ be the pullback of the universal semistable curve along $\kappa$. Then the associated polarized metrized reduction graph $\Gamma_\pi$ is non-trivial and hence, $\varphi(\Gamma_\pi)>0$ by the inequality in (\ref{equ_bound-of-varphi}). Thus, Corollary \ref{cor_ZK} implies, that $\lim_{t\to 0}\varphi(\kappa(t))=\infty$ as desired.
	
	In a similar way we prove, that there is a constant $c_1(g)$ such that 
	$$f(M):=\tfrac{23g^2+11g+2}{(g-1)^2}\varphi(M)-\delta(M)\ge c_1(g)$$
	for all $M\in\mathcal{M}_g$.
	As $c_1(g)$ has not to be positive, it is enough to check, that for every $p\in\overline{\mathcal{M}}_g\setminus\mathcal{M}_g$ and every embedding $\kappa\colon\Delta\to\overline{\mathcal{M}}_g$ with $\kappa(0)=p$ and $\kappa(\Delta^*)\subseteq\mathcal{M}_g$ we have $\lim_{t\to 0}f(\kappa(t))>-\infty$. Again we denote $\pi\colon \mathcal{X}\to \Delta$ for the pullback of the universal semistable curve along $\kappa$ and $\Gamma_\pi$ for the associated polarized metrized reduction graph. We combine Corollary \ref{cor_ZK} and Equation (\ref{equ_delta-dejong-intro}) to obtain
	$$f(\kappa(t))\sim -\left(\tfrac{23g^2+11g+2}{(g-1)^2}\varphi(\Gamma_\pi)-\delta(\Gamma_\pi)-\epsilon(\Gamma_\pi)\right)\log|t| +6g_0(\Gamma_\pi)\log(-\log|t|).$$
	As $\lim_{t\to 0}\log(-\log|t|)=\infty$ and $\tfrac{23g^2+11g+2}{(g-1)^2}\varphi(\Gamma_\pi)- \delta(\Gamma_\pi)-\epsilon(\Gamma_\pi)\ge 0$ by the inequality in (\ref{equ_bound-of-varphi}), we obtain as desired $\lim_{t\to 0}f(\kappa(t))>-\infty$.
\end{proof}
\section{Arithmetic applications}\label{sec_arithmetic}
Finally in this section, we will apply our results to the arithmetic situation of a curve defined over a number field. In particular, we will give the proofs of Corollaries \ref{cor_height-bound} and \ref{cor_uniform-bogomolov}.

Let $X$ be a smooth projective geometrically connected curve of genus $g\ge 2$ over a number field $K$ of degree $d_K=[K\colon \mathbb{Q}]$ with semistable reduction over $B=\mathrm{Spec}~\mathcal{O}_K$. We denote $\hat{\omega}_X$ for the dualizing sheaf of $X$ equipped with its canonical admissible adelic metric defined in \cite{Zha93} and we write $\hat{\omega}_X^2$ for its arithmetic self-intersection number. Moreover, let $\pi\colon\mathcal{X}\to B$ be the minimal regular model of $X$ over $B$ and we denote $\omega_{\mathcal{X}/B}$ for the relative dualizing sheaf. The stable Faltings height is defined by $h_{\mathrm{Fal}}(X)=\frac{1}{d_K}\widehat{\deg}\det\pi_*\omega_{\mathcal{X}/B}$, where $\det\pi_*\omega_{\mathcal{X}/B}$ is equipped with its canonical metric induced by the inner product $\langle \omega,\omega'\rangle=\frac{i}{2}\int \omega\wedge \overline{\omega'}$ on $H^0(X_\mathbb{C},\Omega_{X_\mathbb{C}}^1)$ and $\widehat{\deg}$ denotes the Arakelov degree.

Let $v\in |B|$ be a closed point in $B$. Similarly as in Section \ref{sec_families_riemann}, we would like to define the polarized metrized reduction graph $\Gamma_v(X)$ of $X$ at $v$. The underlying graph of $\Gamma_v(X)$ is the dual graph of the special fiber $\mathcal{X}_v$ of $\pi$ at $v$. That means its vertex set $V$ consists of the irreducible components of $\mathcal{X}_v$ and its edge set consists of the singular points in $\mathcal{X}_v$, such that any edge connects the two not necessarily different vertices associated to the irreducible components meeting in the corresponding singular point. As $\mathcal{X}$ is regular, we assign to every edge $e$ of $\Gamma_v(X)$ the length $\ell(e)=1$ and we obtain a metrized graph, which we also call $\Gamma_v(X)$. We define a polarization on $\Gamma_v(X)$ by the effective divisor
$$K=\sum_{p\in V}(n_p+g_p-2)p,$$
where $n_p$ denotes the valency of the vertex $p$ as in Section \ref{sec_graphs} and $g_p$ denotes the genus of the normalization of the irreducible component of $\mathcal{X}_v$ corresponding to $p\in V$. The polarized metrized graph $\Gamma_v(X)=(\Gamma_v(X),K)$ has genus $g$.

We write for shorter notations
\begin{align*}
\delta(X)=&\sum_{v\in|B|}(\delta(\Gamma_v(X))+\epsilon(\Gamma_v(X)))\log N(v)+\sum_{v\colon K\to\mathbb{C}}(\delta(X_v)-4g\log 2\pi),\\
\varphi(X)=&\sum_{v\in|B|}\varphi(\Gamma_v(X))\log N(v)+\sum_{v\colon K\to\mathbb{C}}\varphi(X_v)
\end{align*}
where $N(v)=\#\mathcal{O}_K/v\mathcal{O}_K$, in each line the second sum runs over all embeddings $v\colon K\to \mathbb{C}$ and $X_v$ is the Riemann surface obtained by the pullback of $X$ induced by $v$.
In particular, we obtain from (\ref{equ_bound-of-varphi}) and from Corollary \ref{cor_lowerbound} that
\begin{align}\label{inequ_varphidelta}
\tfrac{23g^2+11g+2}{(g-1)^2}\varphi(X)\ge \max(\delta(X)+d_Kc_1(g),d_Kc_2(g))
\end{align}
with $c_1(g)$ and $c_2(g)$ as in Corollary \ref{cor_lowerbound}.

In this notation the arithmetic Noether formula proven by Faltings \cite[Theorem~6]{Fal84} states
\begin{align}\label{equ_noether}
12d_K h_{\mathrm{Fal}}(X)=\hat{\omega}_X^2+\delta(X),
\end{align}
where we refer to \cite{Mor89} for the constant term and to \cite[Theorem 5.5]{Zha93} for the difference induced by using the admissible metrized bundle $\hat{\omega}_X$. Further, we recall from \cite[Theorem 1.2]{Wil19} the inequality
\begin{align}\label{inequ_omegavarphi}
\tfrac{2g+1}{g-1}\hat{\omega}_X^2\ge \varphi(X).
\end{align}
With these bounds we are able to prove Corollary \ref{cor_height-bound}.
\begin{proof}[Proof of Corollary \ref{cor_height-bound}]
	We compute
	\begin{align*}
		\tfrac{47g^3+42g^2+18g+1}{(g-1)^3}\hat{\omega}_X^2&=\hat{\omega}_X^2+\tfrac{23g^2+11g+2}{(g-1)^2}\cdot\tfrac{2g+1}{g-1}\hat{\omega}_X^2\ge \hat{\omega}_X^2+\tfrac{23g^2+11g+2}{(g-1)^2}\varphi(X)\\
		&\ge \max(\hat{\omega}_X^2+\delta(X)+d_Kc_1(g),\hat{\omega}_X^2+d_Kc_2(g))\\
		&\ge d_K\max(12h_{\mathrm{Fal}}(X)+c_1(g), c_2(g)),
	\end{align*}
	where the first inequality follows by (\ref{inequ_omegavarphi}), the second one follows from (\ref{inequ_varphidelta}) and the third one follows by the arithmetic Noether formula (\ref{equ_noether}) and the non-negativity of $\hat{\omega}_X^2$. Now the corollary follows by dividing by the coefficient on the left hand side.
\end{proof}
Next we would like to prove Corollary \ref{cor_uniform-bogomolov}. Let $J=\mathrm{Pic}^0(X)$ be the Jacobian variety of $X$ and $\mathcal{L}$ a symmetric ample line bundle on $J$ defining the canonical principal polarization on $J$ and which is rigidified at the origin. Then the multiplication-by-2-map $[2]\colon J\to J$ induces a unique isomorphism $\varphi\colon \mathcal{L}^{\otimes 4}\cong [2]^*\mathcal{L}$. Due to Zhang \cite[Theorem 2.2]{Zha95} there is a unique adelic metric on $\mathcal{L}$ such that this isomorphism becomes an isometry. We write $\hat{\mathcal{L}}$ for the corresponding adelic metrized line bundle.

The Néron--Tate height of an integral cycle $Z\subseteq J$ of dimension $\dim Z=r$ is defined as
$$h_{\mathcal{L}}'(Z)=\frac{\langle\hat{\mathcal{L}}^{r+1}|Z\rangle}{d_K(r+1)\langle\mathcal{L}^r|Z\rangle},$$
where the terms in the brackets denote the (arithmetic) self-intersection number of $\mathcal{L}$ (respectively $\hat{\mathcal{L}}$) restricted to $Z$.
If $1\le r\le g-1$ is an integer, $m\in(\mathbb{Z}\setminus\{0\})^r$ is any vector of non-zero integers and $\alpha\in\mathrm{Div}^1(X)$ is any divisor of degree $1$ on $X$, we define the map $f_{m,\alpha}$ by
$$f_{m,\alpha}\colon X^r\to J,\quad (x_1,\dots, x_r)\mapsto \sum_{j=1}^rm_j(x_j-\alpha)$$
and we write $Z_{m,\alpha}$ for the cycle in $J$ obtained by the image of $f_{m,\alpha}$. In \cite[Theorem~1.1]{Wil19} we computed $h'_{\mathcal{L}}(Z_{m,\alpha})$ and we may deduce from this result
\begin{align}\label{inequ_zmalpha}
h'_{\mathcal{L}}(Z_{m,\alpha})\ge \tfrac{g-r}{2d_K}\left(\left(\tfrac{\sum_{j=1}^r m_j^2}{4(g-1)^2}-\tfrac{(2g+1)\sum_{j<k}^r m_j m_k}{6g(g-1)^2(g-2)}\right)\hat{\omega}_X^2+\tfrac{\sum_{j<k}^r m_j m_k}{3g(g-1)(g-2)}\varphi(X)\right),
\end{align}
where for $r=1$ the fractions with factor $\sum_{j<k}^rm_jm_k=0$ in the nominator are also set to be $0$ if $g=2$. 
This inequality allows us to prove Corollary \ref{cor_uniform-bogomolov}.
\begin{proof}[Proof of Corollary \ref{cor_uniform-bogomolov}]
	First let us bound the right hand side of the inequality (\ref{inequ_zmalpha}) independently of $m$. We will use the following elementary estimates
	\begin{align}\label{inequ_m}
	\sum_{j<k}^r m_j m_k&=\tfrac{r-1}{2}\sum_{j=1}^r m_j^2-\tfrac{1}{2}\sum_{j<k}^r(m_j-m_k)^2\le \tfrac{r-1}{2}\sum_{j=1}^r m_j^2,\\
	\sum_{j<k}^r m_j m_k&=\tfrac{1}{2}\left(\sum_{j=1}^r m_j\right)^2-\tfrac{1}{2}\sum_{j=1}^r m_j^2\ge -\tfrac{1}{2}\sum_{j=1}^r m_j^2.\label{inequ_m2}
	\end{align}
    There are two cases to distinguish.
	\begin{enumerate}[(i)]
		\item Assume that $\sum_{j<k}^rm_jm_k\ge 0$. Then it holds
		\begin{align*}
		&\left(\tfrac{\sum_{j=1}^r m_j^2}{4(g-1)^2}-\tfrac{(2g+1)\sum_{j<k}^r m_j m_k}{6g(g-1)^2(g-2)}\right)\hat{\omega}_X^2+\tfrac{\sum_{j<k}^r m_j m_k}{3g(g-1)(g-2)}\varphi(X)\\
		&\ge\tfrac{(3g(g-2)-(2g+1)(r-1))\sum_{j=1}^rm_j^2}{12g(g-1)^2(g-2)}\hat{\omega}_X^2\ge\tfrac{r}{12g(g-1)}\hat{\omega}_X^2,
		\end{align*}
		where we just used the assumption and (\ref{inequ_m}) for the first inequality and in the second one we used $r\le g-1$ and $\sum_{j=1}^r m_j^2\ge r$.
		\item
		Assume that $\sum_{j<k}^rm_jm_k<0$. In particular, we have $r\ge 2$ and hence, $g\ge 3$. Then it holds
		\begin{align*}
		&\left(\tfrac{\sum_{j=1}^r m_j^2}{4(g-1)^2}-\tfrac{(2g+1)\sum_{j<k}^r m_j m_k}{6g(g-1)^2(g-2)}\right)\hat{\omega}_X^2+\tfrac{\sum_{j<k}^r m_j m_k}{3g(g-1)(g-2)}\varphi(X)\\
		&\ge\left(\tfrac{\sum_{j=1}^r m_j^2}{4(g-1)^2}+\tfrac{(2g+1)\sum_{j<k}^r m_j m_k}{6g(g-1)^2(g-2)}\right)\hat{\omega}_X^2\ge\tfrac{(3g(g-2)-(2g+1))\sum_{j=1}^rm_j^2}{12g(g-1)^2(g-2)}\hat{\omega}_X^2\ge\tfrac{r}{12g(g-1)}\hat{\omega}_X^2
		\end{align*}
		where we applied (\ref{inequ_omegavarphi}) in the first inequality and we used (\ref{inequ_m2}) in the second one. The third inequality follows as
		$3g(g-2)-(2g+1)\ge (g-2)(g-1)$ for $g\ge 3$ and $\sum_{j=1}^r m_j^2\ge r$.
	\end{enumerate}
	Applying this to (\ref{inequ_zmalpha}) we can bound $h_{\mathcal{L}}'(Z_{m,\alpha})$ by
	$$h_{\mathcal{L}}'(Z_{m,\alpha})\ge \tfrac{(g-r)r}{24d_Kg(g-1)}\hat{\omega}_X^2\ge \tfrac{1}{24d_Kg}\hat{\omega}_X^2.$$
	If we now apply Corollary \ref{cor_height-bound}, we obtain the lower bound
	$$h_{\mathcal{L}}'(Z_{m,\alpha})\ge\tfrac{(g-1)^3}{24(47g^4+42g^3+18g^2+g)}\max(12h_{\mathrm{Fal}}(X)+c_1(g),c_2(g))$$
	as claimed in the corollary.
\end{proof}

\vspace{0.5cm}
Robert Wilms \\
Department of Mathematics and Computer Science\\
University of Basel\\
Spiegelgasse 1\\
4051 Basel\\
Switzerland\\
E-mail: robert.wilms@unibas.ch
\end{document}